\title[Nonsmoothable actions of $\mathbb{Z}_2 \times \mathbb{Z}_2$ on Spin four-manifolds]{Nonsmoothable actions of $\mathbb{Z}_2 \times \mathbb{Z}_2$ on Spin four-manifolds}
\author{YUYA KATO}
\address{Graduate School of Mathematical Sciences, the university of Tokyo}
\email{omoti67@gmail.com}
\newcommand{\Rbb}{\mathbb{R}}
\renewcommand{\phi}{\varphi}
\renewcommand{\epsilon}{\varepsilon}
\renewcommand{\Psi}{\varPsi}
\renewcommand{\Phi}{\varPhi}
\newcommand{\Zbb}{\mathbb{Z}}
\theoremstyle{definition}
\newtheorem{df}{Definition}[section]
\theoremstyle{plain}
\newtheorem{thm}[df]{Theorem}
\newtheorem{prp}[df]{Proposition}
\newtheorem{lem}[df]{Lemma}
\newtheorem{cor}[df]{Corollary}
\theoremstyle{remark}
\newtheorem{rem}[df]{Remark}
\begin{document}
\maketitle
\begin{abstract}
We construct some nonsmoothable actions of $\mathbb{Z}_2 \times \mathbb{Z}_2$ on spin four-manifolds by using an equivariant version of Furuta's 10/8-inequality. The examples satisfy following property: any proper subgroup of $\mathbb{Z}_2 \times \mathbb{Z}_2$ is smoothable for some smooth structure.
\end{abstract}
\tableofcontents
\section{Introduction}

In 4-dimentional topology, it is well known that there are many differences between the smooth category and the topological category since Donaldson introduced gauge theory in 1980's. It is also the case with group actions on 4-dimentional manifolds. For instance some applications of the Yang-Mills theory to group actions are found in \cite{F2,NH,R1} and those of the Seiberg-Witten theory in \cite{Br,Bo,K,N1,R2,U} (A good general reference is Edomonds' survey \cite{E}).

In particular K. Kiyono constructed non-smoothable group actions on most of closed spin topological 4-manifolds for cyclic groups with sufficiently large arbitrary prime orders. Kiyono made use of the construction of locally-linear cyclic group actions due to Ewing-Edmonds.

N. Nakamura constructed a subtle example : a locally-linear action of $\Zbb \times \Zbb$ on a 4-manifold, which is homeomorphic to the connected sum of Enriques surface and $S^2 \times S^2$, such that the actions of the subgroups $\Zbb \times \{ 0 \}$ and $\{ 0 \} \times \Zbb$ are smooth for some smooth structure on the manifold respectively while the action of the whole group $\Zbb \times \Zbb$ is not smooth for any smooth structure.

In this paper, we construct some action of $\Zbb_2 \times \Zbb_2$ on a family of spin 4-manifolds which satisfy a similar property.
\begin{thm}
For positive integers $l_1$, $l_2$, $k$ with $l_1 \geq 3k$, $l_2 \geq 3k$, $k \geq 1$, there is a locally linear action of $\Zbb_2 \times \Zbb_2$ on $X = (2l_1 + 2l_2 + 1 - 6k)(S^2 \times S^2) \sharp 4k(K3)$ such that the action of each proper subgroup is smooth for some smooth structure but the action of the whole group is not smooth for any smooth structure.
\end{thm}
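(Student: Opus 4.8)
The plan is to build $X$ with its locally linear $G=\Zbb_2\times\Zbb_2$-action as an equivariant connected sum of standard blocks, to produce---one proper subgroup at a time---a compatible smooth structure, and then to obstruct global smoothability with the equivariant $10/8$-inequality. Fix generators $g_1,g_2$ of $G$ and write $H_1=\langle g_1\rangle$, $H_2=\langle g_2\rangle$, $H_3=\langle g_1g_2\rangle$ for the proper nontrivial subgroups; the trivial subgroup needs no comment, $X$ being manifestly smoothable. \textbf{Construction of the model.} In the spirit of the Ewing--Edmonds and Kiyono constructions, but carried out in the locally linear category for $G$, I would realize $X$ as an equivariant connected sum taken along $G$-orbits of points, assembled from: (i) copies of $S^2\times S^2$ with linear $G$-actions (rotations and reflections on the two factors, and the factor interchange for $g_1g_2$), supplying the $(2l_1+2l_2+1-6k)$ hyperbolic summands and the \emph{room} that will make the subgroup actions smooth; and (ii) $k$ free $G$-orbits of copies of $K3$, each of the form $K3\,\sharp\,g_1K3\,\sharp\,g_2K3\,\sharp\,g_1g_2K3$ glued along a free $G$-orbit of points, supplying the $4k$ copies of $K3$ and all of the signature $\sigma(X)=-64k$. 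The intersection form of the resulting simply connected spin manifold is $8k(-E_8)\oplus(2l_1+2l_2+1+6k)H$, so by Freedman's theorem it is the manifold $X$ of the statement, carrying a locally linear spin-structure-preserving $G$-action with well-defined fixed-point set and normal representations. The key point is that realizability in the locally linear category is constrained only by the $G$-signature formula, never by a $10/8$-type inequality; I would therefore choose the rotation numbers at the $G$-fixed points so that the resulting $G$-signature, the $G$-representation on $H^+(X;\Rbb)$, and the $G$-index of the Dirac operator that this fixed data forces for any compatible smooth structure via the Atiyah--Bott--Segal--Singer fixed-point formula all take precisely the values needed below.

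\textbf{Smoothing the proper subgroups.} For each $i$, restrict the action to $H_i$ and reassemble genuinely smooth blocks---$S^2\times S^2$ with smooth rotation or involution actions, pairs $K3\,\sharp\,K3$ with the interchange involution, and copies of $K3$ with a classical involution where needed---into a smooth closed spin $4$-manifold $Z_i$ carrying a smooth $H_i$-action with the same fixed-point set, the same normal representations, and the same action on $H^2$ as $(X,H_i)$. Freedman's theorem gives a homeomorphism $Z_i\cong X$, and since $Z_i$ and $(X,H_i)$ both arise from equivariant connected sums realizing identical local data, the relative form of the realization theory for locally linear $\Zbb_2$-actions on simply connected $4$-manifolds upgrades this to an $H_i$-equivariant homeomorphism; pulling the smooth structure of $Z_i$ back across it gives a smooth structure on $X$ for which the $H_i$-action is smooth. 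The hypotheses $l_1\ge 3k$ and $l_2\ge 3k$ enter exactly here: they are the thresholds past which the fixed data of the model is compatible with the equivariant $10/8$-inequality for a single involution, hence past which such smooth models $Z_i$ exist.

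\textbf{Nonsmoothability of the whole group.} Suppose some smooth structure $\Sigma$ on $X$ made the $G$-action smooth. Then the $G$-signature of $(X,\Sigma,G)$ agrees with that of the locally linear model (the $G$-signature theorem holds for locally linear actions), the $G$-representation on $H^+(X;\Rbb)$ is the one dictated by the action on cohomology, and the $G$-index of the Dirac operator of $\Sigma$ is the one forced by the fixed-point data through the equivariant index theorem---all equal to the quantities arranged in the construction. Substituting these into the equivariant $10/8$-inequality proved in this paper yields a contradiction: the rotation data was selected so that, although each of $H_1,H_2,H_3$ satisfies its own inequality (witnessed by the smooth models $Z_i$), the strictly stronger inequality attached to the full group $\Zbb_2\times\Zbb_2$ fails. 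Hence no smooth structure makes the $G$-action smooth, and the theorem follows.

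\textbf{Where the difficulty lies.} The crux is the simultaneous numerical balancing threaded through all three steps: the equivariant rotation data at the $G$-fixed points must be chosen so that the $\Zbb_2\times\Zbb_2$-equivariant $10/8$-inequality is violated while each order-two restriction still admits an honest smooth model---a constrained problem over the representation ring of $G$ whose feasibility is exactly what forces $l_1\ge 3k$ and $l_2\ge 3k$. A secondary, more technical, point is the equivariant uniqueness used to transport smooth structures across $Z_i\cong X$, which relies on the realization theory for locally linear finite group actions on simply connected $4$-manifolds.
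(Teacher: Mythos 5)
Your overall shape (equivariant connected sum model, smooth each proper subgroup by reassembling pieces, obstruct the full group with the equivariant $10/8$-inequality) matches the paper, but the construction you propose cannot produce the nonsmoothability, and this is a genuine gap rather than a technicality. You build the $G=\Zbb_2\times\Zbb_2$ model out of \emph{smooth} blocks only: $S^2\times S^2$'s with linear actions and $k$ free $G$-orbits of honest $K3$'s glued along free orbits of points. An equivariant connected sum of smooth actions (at fixed points with matching linear models, or along free orbits) is itself smooth, so your model is smoothable for the whole group by construction; consequently the equivariant $10/8$-inequality is automatically \emph{satisfied} by its data, and no choice of ``rotation numbers'' can make it fail. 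Concretely, with $I_j=-\iota_j^*$ the $\langle I_1,I_2\rangle$-invariant part of $H^2$ of a freely permuted orbit of $K3$'s is a copy of the $K3$ form, contributing $b_+^{\langle I_1,I_2\rangle}\geq 3k$, which already swamps $-\sigma(X)/32$. The paper's key idea, which is missing from your proposal, is that the four freely permuted blocks must be the \emph{non-smoothable} topological pieces $k(-E_8)$ (negative definite, so they contribute $0$ to $b_+^{\langle I_1,I_2\rangle}$), and the $2l_1+2l_2+1$ copies of $S^2\times S^2$ carry the explicit rotation involutions $\iota_0,\iota_0'$, which also contribute $0$ to the invariant part because of the sign in $I=-\iota^*$. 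Then $b_+^{\langle I_1,I_2\rangle}(X)=0$, $\iota_1,\iota_2$ are odd (2-dimensional fixed sets) while $\iota_1\circ\iota_2$ is even (isolated fixed points) --- parity hypotheses you never verify but which Theorem \ref{thm2} requires --- and $\mathrm{index}_{I_1\circ I_2}D=0$ by the Lefschetz formula (Proposition \ref{prp1}) together with an invariant positive scalar curvature metric on $S^2\times S^2$, so Theorem \ref{thm2} gives $0\geq k$, a contradiction. (Incidentally, the paper's construction realizes $(2l_1+2l_2+1)(S^2\times S^2)\,\sharp\,4k(-E_8)\cong(2l_1+2l_2+1-6k)(S^2\times S^2)\,\sharp\,2k(K3)$ with $\sigma=-32k$; your reading of the statement as $4k(K3)$, $\sigma=-64k$, led you toward smooth $K3$ blocks, which is exactly what destroys the obstruction.)

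The subgroup-smoothing step also rests on claims that do not hold or are not needed. ``Compatibility with the equivariant $10/8$-inequality'' is only a necessary condition, so it cannot be the reason smooth models $Z_i$ exist; and the upgrade of a homeomorphism $Z_i\cong X$ to an $H_i$-equivariant one via a ``relative realization theory for locally linear $\Zbb_2$-actions'' is an unproven (and unnecessary) appeal. The paper's argument is elementary: moving the connected-sum point, the restriction of the action to $\langle\iota_1\rangle$ is topologically conjugate to the connected sum of the smooth action $\sharp^{2l_1+1}\iota_0$ with the swap of the two components of $(l_2-3k)(S^2\times S^2)\,\sharp\,k(K3)\coprod(l_2-3k)(S^2\times S^2)\,\sharp\,k(K3)$; here $l_2\geq 3k$ is used precisely so that each swapped half, which is $k(-E_8)\,\sharp\,k(-E_8)$ plus hyperbolic summands, is homeomorphic to a smooth manifold, and the interchange of two identical smooth pieces is smooth. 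So the hypotheses $l_1,l_2\geq 3k$ enter through this explicit pairwise merging of the $E_8$-blocks under each proper subgroup, not through any threshold in the representation ring.
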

Our construction is rather elementary in the sense that we do not use Ewing-Edmonds' construction.

To show the nonsmoothability part of the above theorem, we apply an equivariant version of 10/8-theorem which we prove using the Seiberg-Witten theory. J. Bryan \cite{Br} and C. Bohr \cite{Bo} already used some equivariant versions of the 10/8-theorem \cite{F1} to obtain some results on group actions. They used the group actions preserving spin${}^c$ structure. In our equivariant setting, the group action is not assume to preserve spin${}^c$ structures. Nakamura gave a slight generalization of the Seiberg-Witten theory in  \cite{N2,N3} and his generalization is constructed for a 4-dimentional manifold with a free involution which is not necessarily preserving spin${}^c$ structure. We generalize Nakamura's formulation for involution which is not necessarily free.

The organization of this paper is as follows. In Section 2 we state the equivariant version of the 10/8-theorem. In Section 3 we construct the examples of nonsmoothable actions. In Sections 4 we recall some basic properties of the Seiberg-Witten equations and a class of involution which is called odd involution. In Section 5 we prove our main theorems.

{\bf Acknowledgment}
I would like to thank my adviser Mikio Furuta for his helpful advise and powerful encouragement. This work is supported by the Program for Leading Graduate Schools.

\section{Main Theorems}
Let $X$ be a smooth oriented closed 4-manifold and $Q$ the intersection form which is defined on $ H^2 (X;\mathbb{R}) $. Let $\sigma(X)$ be the signature of $Q$. We assume $\sigma(X) \leq 0$. Suppose $ \iota $ is a smooth involution on $ X $. Let $I$ be the involutive homomorphism on the cohomology groups of X defined by
\begin{align*}
I := -{\iota}^{*} : H^{k}(X;\mathbb{R}) \to H^{k}(X;\mathbb{R})
\end{align*} 
for $ k = 0, 1, 2, \cdots  $.

Let $ H^{k}(X;\mathbb{R})^I $ be the $I$-invariant part of $ H^{k}(X;\mathbb{R}) $ and $ Q^I $ be the restriction of $Q$ to $ H^2 (X;\mathbb{R})^I $. We denote by $ b_{+}^I (X) $ the maximal dimension of positive definite subspaces for $ Q^I $.

We use the following definition.

\begin{df}
	Let $ X $ be a smooth spin 4-manifold, $ P \to X $ be the principal $Spin(4)$-bundle for the spin structure and $ \iota : X \to X $ be a smooth involution on $ X $ preserving the spin structure. We call that $ \iota $ is {\it odd} if $ \tilde{\iota} : P \to P $, a lift of $ \iota $, satisfies $ \tilde{\iota} \circ \tilde{\iota} = -1 $ and {\it even} if $ \tilde{\iota} \circ \tilde{\iota} = 1 $.
\end{df}

In the above definition, if $X$ has two smooth structures for both of which $\iota$ is smooth, then the parity of $\iota$ does not depend on the smooth structures.

The following lemma is well known. See Lemma $\ref{lem1}$ for an explanation.

\begin{lem}
	Let $ X $ be a smooth spin 4-manifold and $ \iota $ be a smooth involution on $ X $ preserving its spin structure. Assume that the fixed point set $ X^{\iota} $ of $ \iota $ is non-empty. If $ X^{\iota} $ has a 2-dimensional component then $ \iota $ is odd, and if $ X^{\iota} $ has a 0-dimensional component then $ \iota $ is even.
\end{lem}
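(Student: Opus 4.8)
The plan is to reduce the statement to a single fixed point $p\in X^{\iota}$, where it becomes a question about the square in $Spin(4)$ of a lift of $d\iota_p$. Write $P_{SO}$ for the oriented orthonormal frame bundle, so that $P\to P_{SO}$ is the double cover given by the spin structure. Since $\iota$ preserves the spin structure it is orientation preserving, so $A:=d\iota_p$ lies in $SO(T_pX)\cong SO(4)$ and $A^{2}=\mathrm{id}$; in an eigenbasis $A=\mathrm{diag}(\lambda_1,\dots,\lambda_4)$ with $\lambda_i\in\{\pm1\}$ and an even number of $-1$'s. If $X^{\iota}$ has a $2$-dimensional component I choose $p$ on it, so $\dim\ker(A-I)=2$ and hence $A\sim\mathrm{diag}(1,1,-1,-1)$; if $X^{\iota}$ has a $0$-dimensional component I choose $p$ to be such an isolated fixed point, so $\ker(A-I)=0$ and hence $A=-\mathrm{id}$. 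Using the exponential map of an $\iota$-invariant metric near $p$, a small ball $U$ around $p$ is $\iota$-equivariantly a ball in $\Rbb^{4}$ on which $\iota$ acts by the orthogonal map $A$, and the constant orthonormal frame coming from the eigenbasis trivializes $P_{SO}|_{U}\cong U\times SO(4)$ in such a way that $d\iota$ becomes $(x,g)\mapsto(\iota(x),Ag)$.

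Next I would pass to $P$. As $U$ is contractible, $P|_{U}\cong U\times Spin(4)$ compatibly with $P|_{U}\to P_{SO}|_{U}$. Any lift $\tilde\iota$ of $d\iota$ over $U$ is then a bundle map $(x,h)\mapsto(\iota(x),\tilde A(x)\,h)$ with $\tilde A(x)$ a lift of the constant matrix $A$; by continuity $\tilde A(x)\equiv\tilde A$ is a fixed lift of $A$, so $\tilde\iota^{2}$ acts on $P|_{U}$ by $(x,h)\mapsto(x,\tilde A^{2}h)$. Since $\tilde A^{2}$ lies in the kernel of $Spin(4)\to SO(4)$ it equals $\pm1$, and since $\tilde\iota^{2}$ is an automorphism of the connected double cover $P\to P_{SO}$ covering the identity, this sign is exactly the global constant defining the parity of $\iota$. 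Thus $\iota$ is odd (resp.\ even) according as $\tilde A^{2}=-1$ (resp.\ $+1$) for a lift $\tilde A$ of $d\iota_p$ at such a $p$.

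It remains to compute $\tilde A^{2}$ in the two cases, which I would do by path lifting: write $A=g_{\pi}$ for a one-parameter subgroup $(g_t)$ of $SO(4)$; from $g_{\pi+s}=g_{\pi}g_{s}$ the lift $(\tilde g_t)$ with $\tilde g_0=1$ satisfies $\tilde A^{2}=\tilde g_{\pi}^{2}=\tilde g_{2\pi}$, and $\tilde g_{2\pi}=-1$ precisely when the loop $t\mapsto g_t$, $t\in[0,2\pi]$, is the nontrivial class of $\pi_1(SO(4))\cong\Zbb/2$. For $A=\mathrm{diag}(1,1,-1,-1)$ take $g_t$ to be rotation by $t$ in the $(3,4)$-plane: its $2\pi$-loop generates $\pi_1(SO(4))$, so $\tilde A^{2}=-1$ and $\iota$ is odd. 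For $A=-\mathrm{id}$ take $g_t$ to be simultaneous rotation by $t$ in the $(1,2)$- and $(3,4)$-planes: its $2\pi$-loop is a product of two generators, hence nullhomotopic, so $\tilde A^{2}=+1$ and $\iota$ is even. (Alternatively both computations can be done directly in $Spin(4)=SU(2)\times SU(2)$ via the quaternionic model of $SO(4)$.)

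The one point that really needs care is the second paragraph: verifying that the locally defined gauge transformation $\tilde\iota^{2}$ is the constant $\pm1$ and that this constant — the parity — is independent of the auxiliary metric and of the chosen fixed point, so that it is legitimately read off from one local model. Everything else is an unwinding of the local model together with two elementary computations in $\pi_1(SO(4))$.
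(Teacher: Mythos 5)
Your argument is correct and rests on the same reduction as the paper's proof of Lemma \ref{lem1}: localize at a fixed point $p$, note that a lift $\tilde\iota$ sends the fiber of $P$ over $p$ to itself by left multiplication by a lift $\tilde A\in Spin(4)$ of $A=d\iota_p$, and that $\tilde\iota\circ\tilde\iota=\tilde A^{2}\in\{\pm1\}$ is the global constant defining the parity. Where you differ is the final computation and the scope: the paper works in the quaternionic model $Spin(4)\cong Sp(1)\times Sp(1)$, where the lift of $\mathrm{diag}(-1,-1,1,1)$ is $\pm(q_+i,q_-i)$ and visibly squares to $-1$, and its written proof treats only the 2-dimensional (odd) case; you instead lift one-parameter subgroups and read off the class of the resulting $2\pi$-loop in $\pi_1(SO(4))\cong\Zbb/2$, which handles $A\sim\mathrm{diag}(1,1,-1,-1)$ (odd) and $A=-\mathrm{id}$ (even) uniformly, so you also cover the 0-dimensional case that the paper leaves implicit; your deck-transformation argument for why the sign may be read off at a single fixed point is likewise a welcome point of care that the paper compresses into ``it suffices to check at a fixed point.'' One small imprecision: the constant coordinate frame from the eigenbasis is orthonormal for the pulled-back metric only at the origin, so it does not literally trivialize $P_{SO}|_{U}$ as claimed; either trivialize by radial parallel transport from $p$ (which is $\iota$-equivariant since $\iota$ is an isometry fixing $p$), or observe that the whole argument only needs the fiber over $p$, where $\iota_*$ is composition with $A$. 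This does not affect the correctness of the proof.
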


One of our main theorems is:

\begin{thm}\label{thm1}
	Let $ ( X , \mathfrak{s} ) $ be a smooth closed oriented connected spin 4-manifold with spin structure $ \mathfrak{s} $ and $ \iota : X \to X $ be a smooth odd involution on X preserving the spin structure $ \mathfrak{s} $. Suppose $\sigma(X) \leq 0$. Then the following inequality holds
	\begin{align*}
	{b_{+}^I}(X) \geq -\frac{\sigma (X)}{16}.
	\end{align*}

\end{thm}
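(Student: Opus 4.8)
The plan is to carry out an $\iota$-equivariant version of the Bauer--Furuta proof of Furuta's $10/8$-inequality \cite{F1}, generalizing Nakamura's free-involution formalism \cite{N2,N3} to the case $X^{\iota}\neq\varnothing$. Since $\iota$ preserves $\mathfrak{s}$, it lifts to a $Spin(4)$-bundle map $\tilde\iota\colon P\to P$ covering $\iota$, and oddness means $\tilde\iota^{2}=-1$; hence the induced map $\hat\iota$ on the positive spinor bundle $S^{+}$ is fibrewise $\mathbb{C}$-linear with $\hat\iota^{2}=-\mathrm{id}$, and it commutes with the antilinear quaternionic structure $j$ on $S^{+}$ (which exists because $S^{+}$ is $Spin(4)$-quaternionic). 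After averaging the metric so that $\iota$ becomes an isometry, the constant-gauge circle $S^{1}$, the quaternionic structure $j$, and the lift $\hat\iota$ generate a compact group $\tilde G$ fitting into a central extension $1\to S^{1}\to\tilde G\to\mathbb{Z}_{2}\times\mathbb{Z}_{2}\to 1$ and containing the usual $Pin(2)=\langle S^{1},j\rangle$; on the form spaces $\Omega^{1}\oplus\Omega^{+}$ the circle $S^{1}$ acts trivially, $j$ acts by $-1$, and $\hat\iota$ acts by $-\iota^{*}=I$. The key structural feature is that $\tau:=j\hat\iota$ is an \emph{antilinear involution} of $S^{+}$, so that $\langle S^{1},\tau\rangle\cong O(2)$: along the $\iota$-twisted directions the governing symmetry is this $O(2)$ rather than $Pin(2)$, and this is what forces the constant $1/16$ rather than $1/8$ (and with no additive $+1$).

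First I would construct the $\iota$-equivariant monopole map. After Coulomb gauge fixing, the Seiberg--Witten map $\mu\colon\mathcal{A}\times\Gamma(S^{+})\to\Gamma(S^{-})\oplus\Omega^{+}\oplus\Omega^{0}$ is $\tilde G$-equivariant for the action above, and its a priori bounds hold $\tilde G$-equivariantly; here one must accommodate the fixed-point set $X^{\iota}$, which for an odd involution, when nonempty, consists of $2$-dimensional components, but the boundedness of solutions is unaffected. Bauer--Furuta finite-dimensional approximation then produces a proper $\tilde G$-equivariant map $f\colon V\to W$, linear outside a compact set, which restricts on the $S^{1}$-fixed subspaces to the prescribed linear map built from $d^{+}$. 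The representation data to extract are: the spinorial part of $V\ominus W$ is $S^{1}$-free and has quaternionic index $\mathrm{ind}_{\mathbb{H}}D=-\sigma(X)/16$, a nonnegative integer because $\sigma(X)\le 0$ and $\sigma(X)\equiv 0\pmod{16}$ by Rokhlin's theorem; on this index $\hat\iota$ acts $\mathbb{C}$-linearly with square $-1$, and its $(+i)$-eigenspace, carried by $\tau$ antilinearly onto the $(-i)$-eigenspace, has complex dimension $-\sigma(X)/16$. On the form side, the cokernel of $f^{S^{1}}$ records $H^{+}(X)=H^{+}_{I}\oplus H^{+}_{-I}$ with $\dim H^{+}_{I}=b_{+}^{I}(X)$, and on $H^{+}_{I}$ one has $\tau=-1$, so these are the sign-representation directions for $O(2)/S^{1}$ (there is also an $H^{1}(X;\mathbb{R})$-contribution to the kernel of $f^{S^{1}}$, similarly split by $I$).

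I would then deduce the inequality by restricting $f$ to a suitable finite subgroup $G_{0}\subset\tilde G$ --- for instance the group generated by $j$, $\hat\iota$, and a primitive fourth root of unity in $S^{1}$ --- and running Furuta's $K$-theoretic counting argument (equivalently, the $G_{0}$-equivariant stable cohomotopy argument of Bauer--Furuta), now in its $O(2)$-type form. The mechanism is the familiar one: a proper $G_{0}$-equivariant map $V\to W$ that is linear near infinity and restricts to the given linear map on the $S^{1}$-fixed part cannot exist unless a dimension inequality holds between the sign-representation part of $W$ and the spinorial excess of $V$ over $W$; feeding in the data above --- comparing the $(+i)$-eigenpart of the spinorial index, of quaternionic size $-\sigma(X)/16$, against the $H^{+}_{I}$-directions --- yields exactly
\begin{align*}
b_{+}^{I}(X)\ \geq\ -\frac{\sigma(X)}{16}.
\end{align*}
The loss of the factor $2$ and the additive $1$ relative to the full inequality $b_{+}(X)\ge -\sigma(X)/8+1$ is precisely the difference between the $O(2)$- and the $Pin(2)$-counting problems.

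The principal difficulties I anticipate are twofold. First, making the $\iota$-equivariant finite-dimensional approximation rigorous when $X^{\iota}\neq\varnothing$: one must verify genuine $\tilde G$-equivariance of the approximating maps and handle the $H^{1}(X;\mathbb{R})$-contribution, together with the associated Picard torus, under the $I$-action --- this is exactly the place where Nakamura's construction for free involutions has to be extended. Second, one must pin down the $\hat\iota$-action on $\mathrm{ind}\,D$ and on $H^{+}(X)$ precisely enough, and identify the correct finite subgroup $G_{0}$ and the exact $O(2)$-counting lemma, so that the argument genuinely returns the sharp constant $1/16$ and detects $b_{+}^{I}(X)$ rather than $b_{+}(X)$ or the wrong eigenspace. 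By contrast, the properness of $f$ and the identification of $f^{S^{1}}$ as the truncated $d^{+}$, the remaining inputs, carry over from the non-equivariant theory without new difficulty.
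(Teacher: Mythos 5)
Your overall skeleton matches the paper's: lift $\iota$ to $\tilde\iota$ with $\tilde\iota^2=-1$, combine it with the quaternionic structure $j$ to get an antilinear involution on spinors (your $\tau=j\hat\iota$ is exactly the paper's $I$), run a Furuta-type finite-dimensional approximation of the monopole map, and conclude by an equivariant counting argument; your identification of the index data (quaternionic index $-\sigma(X)/16$, the $\tau$-exchange of the $\pm i$-eigenspaces of $\hat\iota$) also agrees with the paper. The genuine gap is that the step which actually carries the theorem is never supplied: you invoke ``Furuta's $K$-theoretic counting argument \dots\ in its $O(2)$-type form'' for an unspecified finite subgroup $G_0$, and you yourself list ``identify the correct finite subgroup $G_0$ and the exact $O(2)$-counting lemma'' as an unresolved difficulty. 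That lemma \emph{is} the content of the $1/16$ bound; as written, nothing in your argument explains why the obstruction detects $b_+^I(X)$ rather than $b_+(X)$, nor why the constant comes out as $1/16$ with no additive term. Moreover, if you try to execute the counting on the full (unrestricted) approximation via tom Dieck's character formula, characters at elements of $G_0$ lying over the identity of $\mathbb{Z}_2\times\mathbb{Z}_2$ only see $b_+(X)$ and the full index, while characters at elements involving $\hat\iota$ or $\tau$ require the degree of $f$ restricted to the corresponding fixed subspace --- i.e.\ you are forced back to analyzing the restriction of the monopole map to the $I$-invariant part, which is precisely the reduction you have not carried out. (A small additional slip: the extension $1\to S^1\to\tilde G\to\mathbb{Z}_2\times\mathbb{Z}_2\to 1$ is not central, since $j$ and $\tau$ conjugate $S^1$ by inversion.)

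For comparison, the paper performs that reduction at the outset, which makes the counting elementary: restrict $SW$ to the fixed part of the antilinear involution $I$; the residual constant-gauge symmetry is $Pin(2)^I=\langle j\rangle\cong\mathbb{Z}_4$; finite-dimensional approximation yields a $\mathbb{Z}_4$-equivariant map $V\to W$ with
\begin{align*}
V\cong\tilde{\mathbb{R}}^m\oplus\mathbb{C}_1^{\,n+k},\qquad W\cong\tilde{\mathbb{R}}^{m+b}\oplus\mathbb{C}_1^{\,n},
\end{align*}
where $b=b_+^I(X)$ and $k=\tfrac12\,\mathrm{index}_{\mathbb{R}}D^I=-\sigma(X)/16$, the last equality because $I$ is a real structure on $\mathrm{Ker}\,D$ and $\mathrm{Coker}\,D$ (this is the rigorous form of your eigenspace statement). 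Complexifying and applying tom Dieck's formula at the element $j$ gives $\mathrm{tr}_j(\deg f)=2^{\,b-k}$, which is an algebraic integer only if $b\ge k$, i.e.\ $b_+^I(X)\ge-\sigma(X)/16$. To complete your proposal you would need to either prove your ``$O(2)$-counting lemma'' from scratch or make this restriction-to-$(I$-invariants$)$ argument explicit; in its current form the proof is incomplete at its decisive point.
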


Suppose we have two involutions $ \iota_1 $ and $ \iota_2 $ on X which are mutually commutative. Then they give an action of $\Zbb_2 \times \Zbb_2$. They also give the involutive homomorphisms $I_1$ and $I_2$ on the cohomology groups of $X$ defined by
\begin{align*}
I_j := -\iota_j : H^k(X;\Rbb) \to H^k(X;\Rbb)
\end{align*}
for $j=1,2$ and $k=0,1,2,\cdots$.
Let $H^k(X;\Rbb)^{\langle I_1, I_2 \rangle}$ be the $\langle I_1, I_2 \rangle$-inariant part of $H^k(X;\Rbb)$ and $Q^{\langle I_1, I_2 \rangle}$ be the restriction of $Q$ to $H^2(X;\Rbb)^{\langle I_1, I_2 \rangle}$. We denote by $b^{\langle I_1, I_2 \rangle}_+(X)$ the maximal dimension of positive definite subspace for $Q^{\langle I_1, I_2 \rangle}$.

Our second main theorem is:

\begin{thm}\label{thm2}
	Let $ (X,\mathfrak{s}) $ be a smooth closed oriented connected spin 4-manifold with spin structure $ \mathfrak{s} $ and $ \iota_1 $ and $ \iota_2 $ be two commutative odd involutions preserving the spin structure $ \mathfrak{s}  $ such that their composition $ \iota_1 \circ \iota_2 = \iota_2 \circ \iota_1 $ is an even involution. Suppose $\sigma(X) \leq 0$. Then the following inequality holds
	\begin{align*}
	b_{+}^{\langle I_1, I_2 \rangle} (X) \geq -\frac{\sigma (X)}{32} + \frac{| \mathrm{index}_{I_1 \circ I_2} D | }{8}.
	\end{align*}
\end{thm}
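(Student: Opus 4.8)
The plan is to prove Theorem~\ref{thm2} by an equivariant refinement of Furuta's $10/8$ argument, following the proof of Theorem~\ref{thm1} carried out in Section~5 but running it with both involutions simultaneously; the genuinely new ingredient is the interaction between the two odd lifts, the $Pin(2)$-symmetry, and the even composition $\iota_1\circ\iota_2$. First I would fix lifts $\tilde\iota_1,\tilde\iota_2$ of $\iota_1,\iota_2$ to the principal $Spin(4)$-bundle; oddness means $\tilde\iota_j\circ\tilde\iota_j=-1$, and since $\iota_1,\iota_2$ commute and $\iota_1\circ\iota_2$ is even, the lifts commute and $\tilde\tau:=\tilde\iota_1\tilde\iota_2$ satisfies $\tilde\tau^2=1$ (it realizes the even involution $\iota_1\circ\iota_2$). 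Combined with the $Pin(2)=\langle S^1,j\rangle$-symmetry of the spin (not merely spin${}^c$) Seiberg--Witten monopole map, this gives a compact group $G$ acting on the monopole map in which $\tilde\iota_1,\tilde\iota_2$ are central; on the spinors $\tilde\iota_j$ is $\mathbb{C}$-linear and commutes with $j$, hence is $\mathbb{H}$-linear, while on the reducible summand ($i\Omega^+$, and $H^1$ if nonzero) it acts through the natural action of $\iota_j$ on forms. The key observation is that $r_j:=j\tilde\iota_j$ is an antilinear involution ($r_j^2=1$) on spinors but acts on the harmonic self-dual forms exactly as $I_j=-\iota_j^{*}$; hence the $\langle I_1,I_2\rangle$-invariant part of the positive cone of $H^2(X;\mathbb{R})$ is the part fixed by $r_1,r_2$, and there $j=-1$, $\tilde\iota_1=\tilde\iota_2=-1$, $\tilde\tau=+1$. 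This is exactly why $b_{+}^{\langle I_1, I_2 \rangle}(X)$, and not the $\iota_j^{*}$-invariant $b_+$, is the quantity that appears.

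Next I would invoke Nakamura's formulation of the Seiberg--Witten theory for involutions not preserving a spin${}^c$ structure, extended to involutions with nonempty fixed-point set as developed in Sections~4--5, and carry out finite-dimensional (Bauer--Furuta) approximation $G$-equivariantly. This produces, for suitable large $G$-invariant subspaces, a proper $G$-equivariant map $f:(U\oplus\mathcal{H})^{+}\to(U'\oplus\mathcal{H}')^{+}$ between one-point compactifications of $G$-representations, where $U,U'$ carry the trivial $S^1$-action and $U'\ominus U\cong H^{+}(X;\mathbb{R})$ as virtual $G$-modules (with $j$ acting by $-1$ and $\tilde\iota_j$ by the form action), $\mathcal{H},\mathcal{H}'$ are quaternionic with $\mathcal{H}\ominus\mathcal{H}'\cong\mathrm{ind}\,D$ as virtual $G$-modules, and $f$ restricted to the $S^1$-fixed set is the inclusion $U^{+}\hookrightarrow(U')^{+}$.

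For the index bookkeeping I would use that $\dim_{\mathbb{H}}(\mathcal{H}\ominus\mathcal{H}')=\mathrm{ind}_{\mathbb{H}}D=-\sigma(X)/16$, decompose $\ker D-\mathrm{coker}\,D$ under the $\mathbb{H}$-linear central involution $\tilde\tau$ into its $(\pm1)$-eigenparts of virtual quaternionic dimensions $n_{+},n_{-}$, so that $n_{+}+n_{-}=-\sigma(X)/16$, and identify $\mathrm{index}_{I_1 \circ I_2}D=2(n_{+}-n_{-})$ by the Atiyah--Singer $G$-index theorem --- here one uses that $\iota_1\circ\iota_2$ is even, so by the lemma above its fixed-point set is $0$-dimensional and the equivariant index is a signed count over isolated fixed points. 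On each $\tilde\tau$-eigenspace one has $\tilde\iota_2=\mp\tilde\iota_1$, so the remaining structure there is a single real structure $r_1$.

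Finally I would feed $f$ into Furuta's $K$-theoretic (equivalently $\mathbb{Z}/4$-cohomological) obstruction, taken $G$-equivariantly and restricted to the $\langle r_1,r_2\rangle$-trivial isotypic component of $U'\ominus U$, whose positive part has dimension $b_{+}^{\langle I_1, I_2 \rangle}(X)$. Tracking the $Pin(2)$-weights together with the central elements $\tilde\iota_j$, the $\tilde\tau$-grading and the real structures $r_j$ through the relevant Bott classes, the quaternionic spinor part contributes $-\sigma(X)/32$ --- half of the non-equivariant $-\sigma(X)/16$, the real structures $r_1,r_2$ halving the effective module --- while the imbalance $|n_{+}-n_{-}|$ of the $\tilde\tau$-decomposition forces the extra term $|\mathrm{index}_{I_1 \circ I_2}D|/8$; summing these yields the claimed inequality. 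I expect this last step --- the precise $G$-equivariant $K$-theoretic computation turning the representation structure of $\mathrm{ind}\,D$ into the numerical bound, in particular pinning down the constants $1/32$ and $1/8$ --- to be the main obstacle, just as deriving Furuta's ``$+1$'' is the crux of the original $10/8$ theorem, now complicated by the extra central symmetries and the grading by $\tilde\tau$.
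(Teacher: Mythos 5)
Your proposal sets up the right kind of machinery (equivariant finite--dimensional approximation of the spin monopole map plus an equivariant $K$-theoretic character argument, in the spirit of Furuta and Bryan), but it stops exactly where the theorem lives: the final step, in which the constants $-\sigma(X)/32$ and $|\mathrm{index}_{I_1\circ I_2}D|/8$ are supposed to come out of ``tracking the $Pin(2)$-weights, the $\tilde\tau$-grading and the real structures through the Bott classes,'' is not carried out, and you yourself flag it as ``the main obstacle.'' That computation is not a routine verification --- it is the entire content of Theorem \ref{thm2}. The paper's proof makes it trivial by a different organization: one does not keep the full group generated by $Pin(2)$ and the linear lifts $\tilde\iota_1,\tilde\iota_2$ (which, note, satisfy $\tilde\iota_j^2=-1$ and so do not even give a $\mathbb{Z}_2\times\mathbb{Z}_2$-action; whether they commute rather than anticommute also needs an argument you omit). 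Instead one restricts the monopole map to the $\langle I_1,I_2\rangle$-invariant part, where $I_j$ are the \emph{anti}-complex-linear involutions $\phi\mapsto$ ($\tilde\iota_j$ followed by right multiplication by $j^{-1}$); the residual symmetry is just $Pin(2)^{I}=\langle j\rangle\cong\mathbb{Z}_4$, and the proof of Theorem \ref{thm1} is repeated verbatim with $b=b_+^{\langle I_1,I_2\rangle}(X)$ and $k=\tfrac12\,\mathrm{index}_{\mathbb{R}}D^{\langle I_1,I_2\rangle}$, the inequality $b\geq k$ coming from tom Dieck's formula (Lemma \ref{lem2}) at $g=j$, which gives $\mathrm{tr}_j(\deg f)=2^{b-k}$, an algebraic integer.

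Two concrete ingredients of that reduction are missing from your plan. First, the numerical identification of $k$: by character averaging over the four elements $\{\mathrm{id},I_1,I_2,I_1\circ I_2\}$ one has $k=\tfrac18\bigl(\mathrm{index}_{\mathbb{R}}D+\mathrm{index}_{I_1}D+\mathrm{index}_{I_2}D+\mathrm{index}_{I_1\circ I_2}D\bigr)$, and the key point is that $\mathrm{index}_{I_1}D=\mathrm{index}_{I_2}D=0$ because each $I_j$ is an anti-linear involution (a real structure) on $\ker D$ and $\mathrm{coker}\,D$, so its real trace vanishes; this is what produces $k=-\sigma(X)/32+\mathrm{index}_{I_1\circ I_2}D/8$. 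Your bookkeeping, organized instead around the $\tilde\tau$-eigenspace decomposition, never uses this vanishing, and your identity $\mathrm{index}_{I_1\circ I_2}D=2(n_+-n_-)$ is inconsistent with the paper's real-trace convention (a quaternionic line on which $I_1\circ I_2$ acts by $\pm1$ contributes $\pm4$ to the real trace), so even the input to your final computation is off by normalization. Second, the absolute value: the paper obtains $|\mathrm{index}_{I_1\circ I_2}D|$ simply by replacing the lift $I_1$ by its other choice, which flips the sign of $\mathrm{index}_{I_1\circ I_2}D$ and yields the inequality with the opposite sign; your proposal has no mechanism for this and just asserts that ``the imbalance $|n_+-n_-|$ forces the extra term.'' Until the $G$-equivariant $K$-theory step is actually performed (or replaced by the invariant-part reduction above), the proposal is a plausible program rather than a proof.
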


Here $D$ is the Dirac operator and $I_1 \circ I_2$ is a lift of $\iota_1 \circ \iota_2$ on the positive and negative spinor bundles which will be explained in Subsection $\ref{Odd}$, and we define
\begin{align*}
	\mathrm{index}_{I_1 \circ I_2} D = \mathrm{trace}_{I_1 \circ I_2} (\mathrm{Ker}D) - \mathrm{trace}_{I_1 \circ I_2} (\mathrm{Coker}D),
\end{align*}
where the traces are defined to be those for real vector spaces.

In particular when $\iota_1 = \iota_2$, since $I_1 \circ I_2 = \pm \mathrm{id}$, we have $|\mathrm{index}_{I_1 \circ I_2} D| = |\mathrm{index} D| = - \frac{\sigma(X)}{4}$. 

When $\iota_1$ is not equal to $\iota_2$, the fixed point set $X^{\iota_1 \circ \iota_2}$ is isolated. For a fixed point $x \in X^{\iota_1 \circ \iota_2}$, the action of $I_1 \circ I_2$ on the fiber of the positive spinor bundle at $x$ is either $+\mathrm{id}$ or $-\mathrm{id}$. We call $x$ a positive fixed point if it is $+\mathrm{id}$, and negative fixed point if it is $-\mathrm{id}$. For a positive fixed point $x$, the action of $I_1 \circ I_2$ on the negative spinor bundle at $x$ is $-\mathrm{id}$. For a negative fixed point, it is $+\mathrm{id}$. Let $n_+$ be the number of the positive fixed points, and $n_-$ the number of negative ones. The Lefschetz formula for the equivariant index of elliptic operator \cite{AS} implies:

\begin{prp}\label{prp1}
When $\iota_1$ is not equal to $\iota_2$
	\begin{align*}
		\mathrm{index}_{I_1 \circ I_2} D = \frac{n_+ - n_-}{2}
	\end{align*}
\end{prp}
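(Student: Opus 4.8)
The plan is to apply the Atiyah–Segal–Singer equivariant index theorem (the holomorphic/$G$-index Lefschetz fixed point formula of \cite{AS}) to the Dirac operator $D$ together with the lift $I_1 \circ I_2$ of the even involution $\iota_1 \circ \iota_2$. Since $\iota_1 \neq \iota_2$ we are told $\iota_1 \circ \iota_2$ is an even involution with isolated fixed point set, so the Lefschetz formula expresses $\mathrm{index}_{I_1 \circ I_2} D$ as a finite sum of local contributions, one for each fixed point $x \in X^{\iota_1 \circ \iota_2}$. The first step is to recall that at an isolated fixed point $x$ the tangent action of $\iota_1 \circ \iota_2$ is $-\mathrm{id}$ on $T_x X \cong \mathbb{R}^4$, so the local datum is entirely determined by the action on the spinor fibers; the lift $I_1 \circ I_2$ acts by $\pm\mathrm{id}$ on $S^+_x$ and by $\mp\mathrm{id}$ on $S^-_x$, which is exactly the dichotomy defining $n_+$ and $n_-$ in the statement.

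The second step is to compute the local Lefschetz contribution of a single fixed point. Writing the tangent action as $-1 = e^{i\pi}$ on two complex coordinates, the standard $\mathrm{Spin}(4)$-weight bookkeeping gives the local index density $\nu(x) = \frac{\varepsilon_x}{\prod (1 - \lambda_j^{-1})}$ where $\lambda_j = -1$ for $j = 1, 2$ are the rotation eigenvalues and $\varepsilon_x = \pm 1$ records the sign by which the lift acts on $S^+_x$ (relative to $S^-_x$). The denominator is $(1-(-1))(1-(-1)) = 4$, so each positive fixed point contributes $+\tfrac14$ and each negative fixed point contributes $-\tfrac14$ to the \emph{complex} equivariant index. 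Because the traces in the statement are taken over the underlying real vector spaces, and the relevant representation is the sign representation of $\mathbb{Z}_2$ (so complexification doubles real traces), one gets a factor of $2$; summing over all fixed points yields
\begin{align*}
\mathrm{index}_{I_1 \circ I_2} D = 2 \cdot \frac{n_+ - n_-}{4} = \frac{n_+ - n_-}{2}.
\end{align*}

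The routine parts are the character computation at a fixed point and the bookkeeping for the $\mathrm{Spin}(4)$ lift; these are standard once one fixes conventions. I expect the main obstacle to be precisely the consistency of conventions: one must make sure that the normalization of $\mathrm{index}_{I_1 \circ I_2} D$ as a \emph{real} trace (as defined just before the proposition) matches the complex output of the Atiyah–Segal–Singer formula, so that the factor of $2$ appears correctly and cancels the $\tfrac14$ to give $\tfrac12$; and one must verify that the assignment of ``positive'' versus ``negative'' fixed point used in the local formula agrees with the sign convention fixed in the text (action of $I_1 \circ I_2$ on $S^+_x$). Both are checked by testing against the model case $\iota_1 = \iota_2$ noted in the text, where $|\mathrm{index}_{I_1 \circ I_2} D| = |\mathrm{index}\, D| = -\sigma(X)/4$, which pins down the normalization unambiguously.
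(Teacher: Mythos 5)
Your proposal is correct and follows essentially the same route as the paper, which simply invokes the Atiyah--Segal Lefschetz fixed point formula at the isolated fixed points of $\iota_1 \circ \iota_2$ and leaves the local computation implicit: each fixed point contributes $\pm\tfrac14$ to the complex equivariant index, and the real-trace convention doubles this to give $\tfrac{n_+-n_-}{2}$. One small remark: the local term you quote, $\varepsilon_x/\prod_j(1-\lambda_j^{-1})$, is the Dolbeault-type formula rather than the spin one, whose contribution is $\bigl(\mathrm{tr}(I_1 I_2|_{S^+_x})-\mathrm{tr}(I_1 I_2|_{S^-_x})\bigr)/\det\bigl(1-d(\iota_1\iota_2)_x\bigr)=\pm 4/16$; since both equal $\pm\tfrac14$ when all rotation angles are $\pi$, this does not affect your conclusion.
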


\section{Examples of nonsmoothable actions}
In this section we explain how to construct nonsmoothable actions, assuming our main theorems. We have two types of examples of the nonsmoothable actions. The first one is an example of nonsmoothable $ \mathbb{Z}_2 $-action. The second one may be more interesting than the first one : it is an example of nonsmoothable $ \mathbb{Z}_2 \times \mathbb{Z}_2 $-action such that each of the restricted action to any proper subgroup is smoothable for some smooth structure. 

\subsection{$ \mathbb{Z}_2 $-actions}

Let $\iota_0$ be a smooth diffeomorphism on $ S^2 \times S^2 $ defined by

\begin{align*}
\iota_0 : S^2 \times S^2 \ni \left(\begin{pmatrix} x \\ y \\ z \end{pmatrix},\begin{pmatrix} x' \\ y' \\ z' \end{pmatrix} \right) \mapsto \left(\begin{pmatrix} -x \\ -y \\ z \end{pmatrix},\begin{pmatrix} x' \\ y' \\ z' \end{pmatrix} \right) \in S^2 \times S^2.
\end{align*}
The fixed point set of $ \iota_0 $ is $ \{ (0,0,1), (0,0,-1) \} \times S^2 $, which implies $\iota_0$ is an odd involution. See Figure 1. This simple $\Zbb_2$-action plays a role of building block in construction of our nonsmoothable actions. 
\begin{figure}[h]
	\def \svgwidth{0.3\columnwidth}
	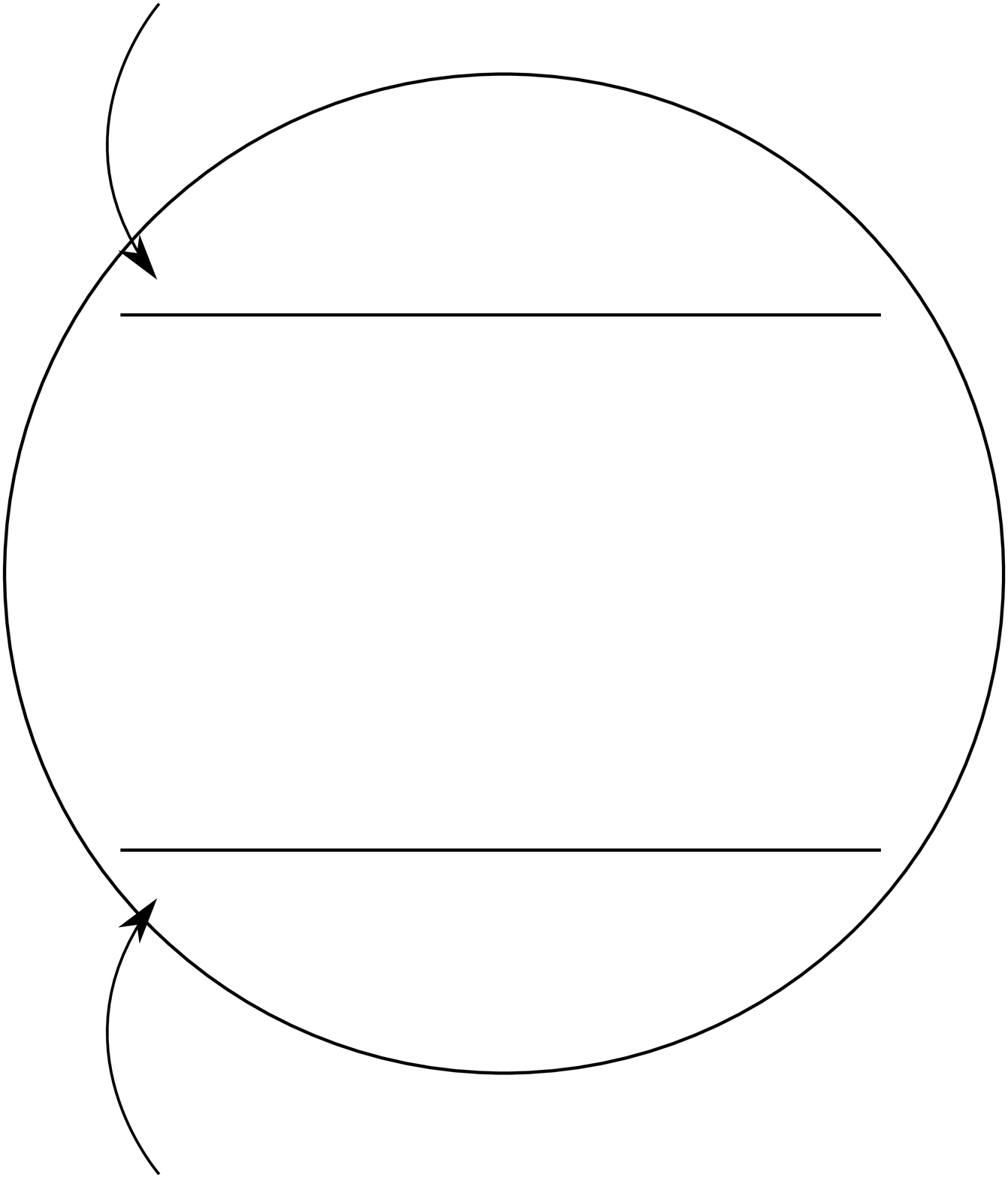
	\caption{}
\end{figure}

Next we take a connected sum of several copies of $ \iota_0 $'s in the following way. Around a fixed point, $ \iota_0 $ locally looks like the following involution $ i_0 $ on $ \mathbb{R}^4 $.
\begin{align*}
i_0 : \mathbb{R}^4 \ni \begin{pmatrix} x \\ y \\ z \\ w \end{pmatrix} \mapsto \begin{pmatrix} -x \\ -y \\ z \\ w \end{pmatrix} \in \mathbb{R}^4
\end{align*}
Now we consider taking a connected sum of two copies of $ i_0 $ at the origins. Let $ i'_0 $ be the restriction of $ i_0 $ to the complement of the open unit ball in $\mathbb{R}^4$. Take two copies of $ i'_0 $. These two involutions on $ A := \{ x \in \mathbb{R}^4 |  \| x \| \geq 1 \} $ agree on boundary $ \partial A = S^3 $, so we can define an involution on $ A \cup_{S^3} A $ such that equal to $ i_0 $ on two $ A $'s. We denote it by $i_0 \sharp i_0$. See Figure 2.

\begin{figure}[h]
	\def \svgwidth{0.7\columnwidth}
	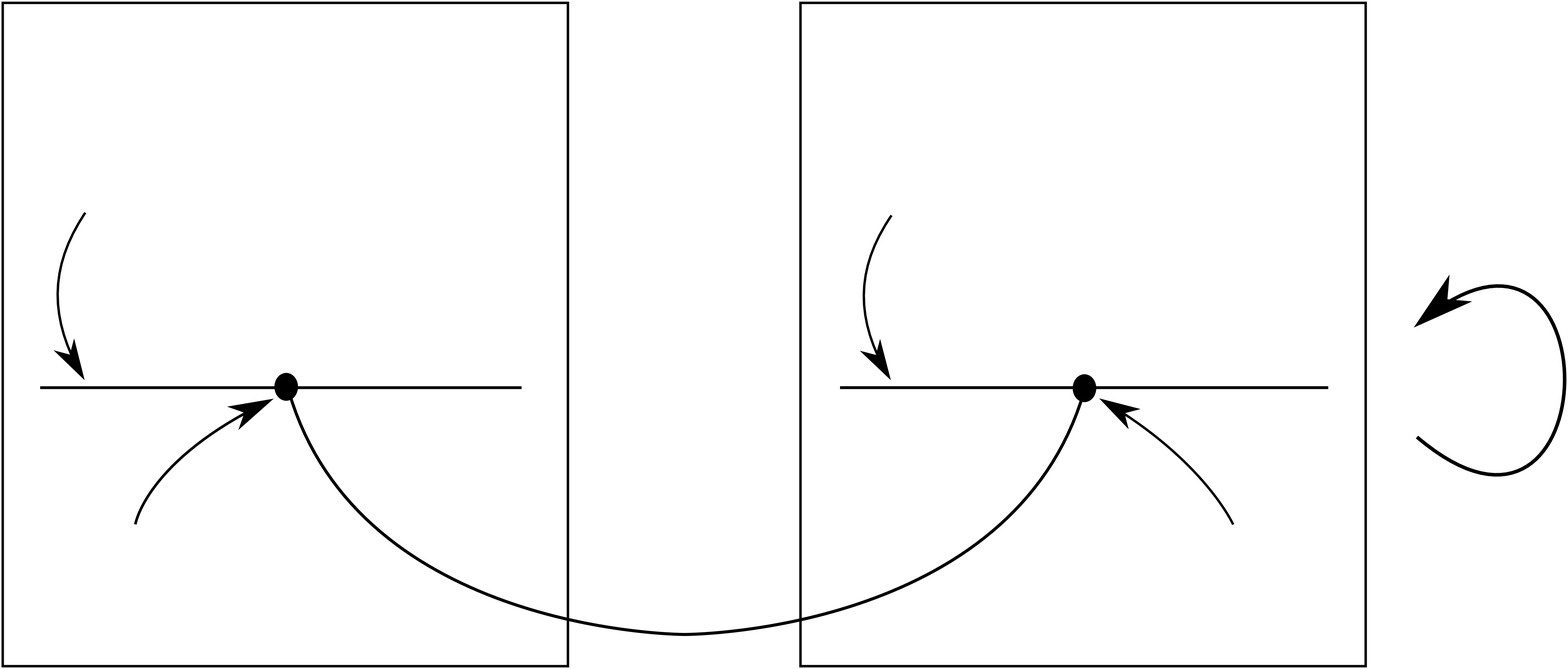
	\caption{}
\end{figure}

Similar to the above construction, we take a connected sum of two $ \iota_0 $'s at fixed points. As in the previous construction we further take a connected sum of $ l $-times copies of $ \iota_0 $'s, which we denote by $ \sharp^l \iota_0 $. See Figure 3.

\begin{figure}[h]
	\def \svgwidth{\columnwidth}
	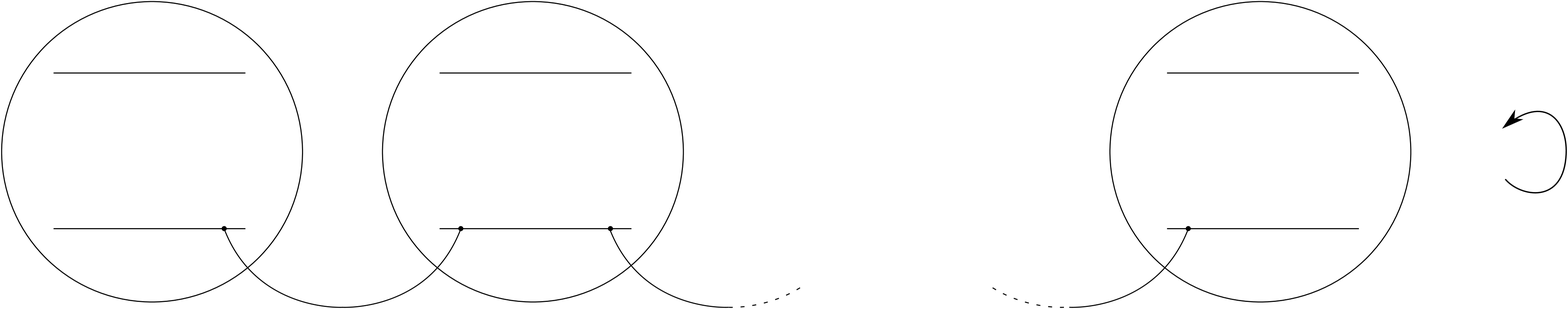
	\caption{}
\end{figure}

The last step of the construction is as follows. First, we define an topological involution $ \kappa $ on $ \sharp^k (-E_8) \coprod \sharp^k (-E_8) $ as exchange of their components. 

Second, we take the connected sum of $ \kappa $ and $ \sharp^l \iota_0 $. Take a point $ p \in \sharp^l (S^2 \times S^2) $ outside the fixed point set of $ \sharp^l \iota_0 $ and remove small open balls $ B_p $ and $ B'_p $ around $ p $ and $ \sharp^l \iota (p) $. Similarly take a some point $ q $ in $ \sharp^k (-E_8) \coprod \sharp^k (-E_8) $ and remove small open balls $ B_q $ and $ B'_q $ around $ q $ and $ \kappa (q) $.

Now we have two involutions on $ S^3 \coprod S^3 \cong \partial ( \sharp^l(S^2 \times S^2) \setminus B_p \cup B'_p) \cong \partial ( \sharp^k (-E_8) \coprod \sharp^k (-E_8) \setminus B_q \cup B'_q ) $, where $ \cong $ stands for the homeomorphisms, which are obtained as restrictions of $ \sharp^l \iota_0 $ and $ \kappa $ to the boundaries of their domains. These two coincide as involutions on $ S^3 \coprod S^3 $ because these are both exchanging the components. So we obtain an involution $ \iota $ on $ \sharp^l(S^2 \times S^2) \sharp (\sharp^{2k} (-E_8)) $ which is the connected sum of $ \sharp^l \iota_0 $ and $ \kappa $. See Figure 4.

\begin{figure}[h]
	\def \svgwidth{\columnwidth}
	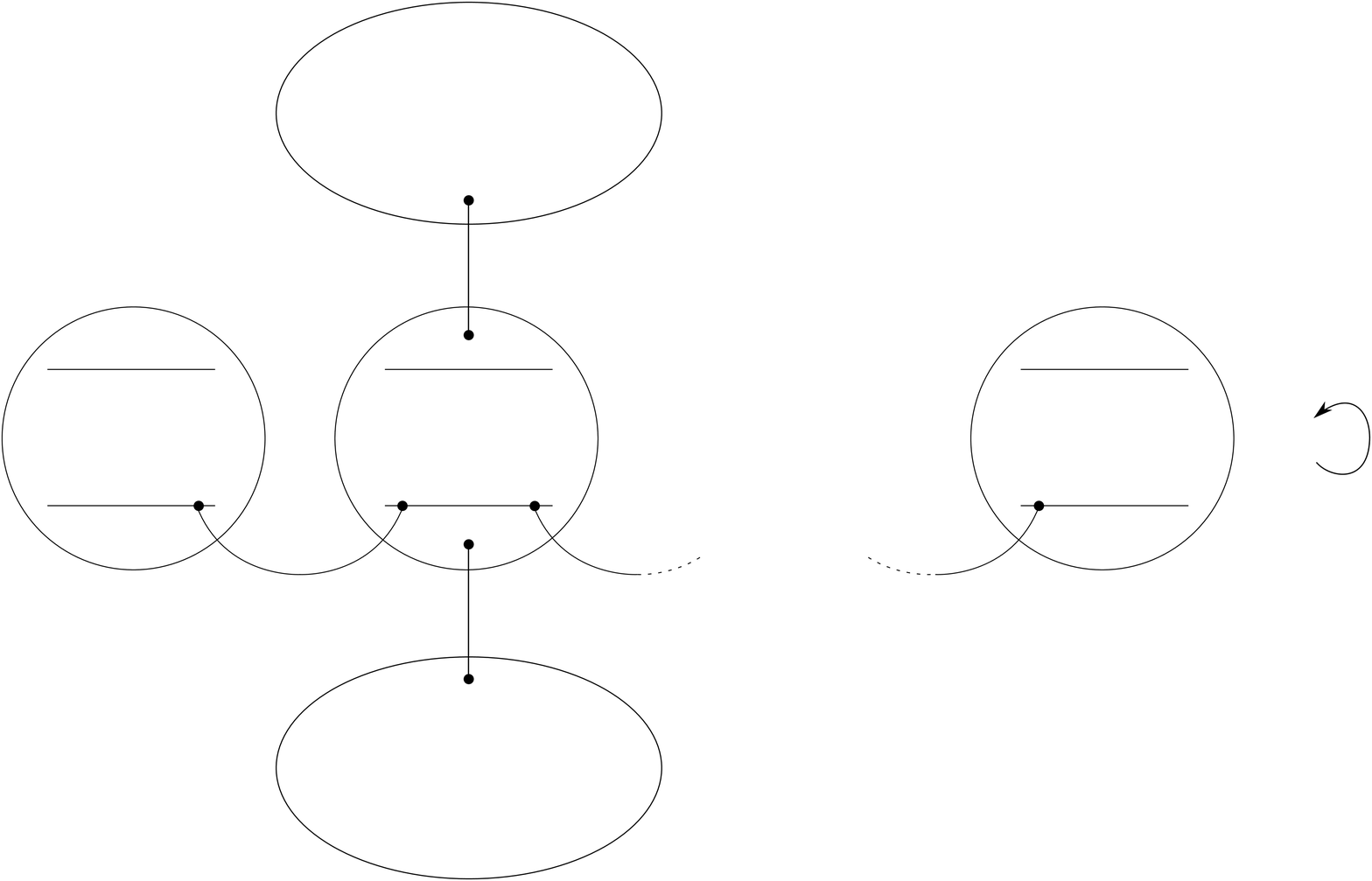
	\caption{}
\end{figure}

Now we show that $ \iota $ is a nonsmoothable $\mathbb{Z}_2$-action when $ k $ is grater than or equal to 1. Set $ X := \sharp^l (S^2 \times S^2) \sharp (\sharp^{2k} (-E_8)) $. Notice that $ X $ has a smooth structure if $ l \geq 3k $. 

\begin{prp}
	$ \iota : X \to X $ is nonsmoothable when $ k \geq 1 $.
\end{prp}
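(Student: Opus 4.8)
The plan is to argue by contradiction using Theorem \ref{thm1}. Assume there is a smooth structure on $X = \sharp^l(S^2\times S^2)\sharp(\sharp^{2k}(-E_8))$ with respect to which $\iota$ is a diffeomorphism; I will show this forces $b_+^I(X)\ge k$ while a direct computation gives $b_+^I(X)=0$. First I would record the relevant topology of $X$: it is simply connected, and its intersection form $lH\oplus 2k(-E_8)$ (with $H$ the hyperbolic form) is even, so $w_2(X)=0$ and $X$ is spin; since $H^1(X;\mathbb{Z}_2)=0$ the spin structure is unique, hence automatically preserved by $\iota$. Also $\sigma(X)=l\cdot 0+2k\cdot(-8)=-16k\le 0$, so the signature hypothesis of Theorem \ref{thm1} is satisfied.

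Next I would check that $\iota$ is odd. The fixed-point set of each building block $\iota_0$ on $S^2\times S^2$ is the pair of $2$-spheres $\{(0,0,\pm 1)\}\times S^2$; the connected sums defining $\sharp^l\iota_0$ are performed at fixed points, so they only connect-sum these surfaces to one another, and the final connected sum with $\kappa$ is performed away from the fixed set (and $\kappa$ itself is free). Hence $X^\iota$ is a non-empty closed surface; in particular it has a $2$-dimensional component, so by the parity lemma stated above $\iota$ is odd. Theorem \ref{thm1} then applies and yields $b_+^I(X)\ge -\sigma(X)/16 = k \ge 1$.

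It remains to compute $b_+^I(X)$ directly. Since every gluing in the construction is along a $3$-sphere, $H^2(X;\mathbb{R})$ splits orthogonally as $V_0\oplus V_1\oplus V_2$, where $V_0=H^2(\sharp^l(S^2\times S^2);\mathbb{R})$ carries $lH$ and $V_1,V_2$ are the two copies of $H^2(\sharp^k(-E_8);\mathbb{R})$, each carrying the negative-definite form $q:=k(-E_8)$; moreover $\iota^*$ preserves this decomposition. On $V_0$ the map $\sharp^l\iota_0$ is a connected sum of copies of $\iota_0$, and $\iota_0$ is the product of a rotation of the first $S^2$-factor with the identity, hence isotopic to the identity and trivial on cohomology, so $\iota^*|_{V_0}=\mathrm{id}$ and $I|_{V_0}=-\mathrm{id}$, giving $V_0^I=0$. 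On $V_1\oplus V_2$ the map is $\kappa$, which exchanges the two $\sharp^k(-E_8)$ summands isometrically, so $I(a,b)=(-b,-a)$ and $(V_1\oplus V_2)^I=\{(a,-a):a\in V_1\}$, on which $Q$ restricts to $(a,-a)\mapsto 2q(a)$, a negative-definite form. Hence $Q^I$ is negative definite and $b_+^I(X)=0$, contradicting $b_+^I(X)\ge 1$. Therefore no smooth structure makes $\iota$ a diffeomorphism.

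The only point requiring care is the cohomological bookkeeping in the last step — confirming that $\iota^*$ respects the connected-sum splitting $V_0\oplus V_1\oplus V_2$ and acts as the identity on $V_0$ and as the swap on $V_1\oplus V_2$ — but this is routine once one tracks how $\iota$ is assembled from $\sharp^l\iota_0$ and $\kappa$. The real input is Theorem \ref{thm1} itself, proved later via the equivariant Seiberg--Witten theory; given that theorem, this proposition is essentially a computation.
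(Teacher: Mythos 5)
Your proof is correct and follows essentially the same route as the paper: assume smoothability, verify the hypotheses of Theorem \ref{thm1} (spin, unique spin structure, $\iota$ odd because its fixed set has a $2$-dimensional component), and contradict the resulting bound $b_+^I(X)\geq k\geq 1$ with the computation $b_+^I(X)=0$. The only difference is that you spell out the cohomological bookkeeping ($I=-\mathrm{id}$ on the $S^2\times S^2$ summands and the swap on the two $\sharp^k(-E_8)$ summands giving a negative definite invariant part), which the paper dismisses as ``a simple calculation.''
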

\begin{proof}
	Suppose that $ \iota $ is smoothable and consider the smooth structure on $ X $ for which $ \iota $ is smooth. Since $ X $ is simply connected it has a unique spin structure $\mathfrak{s}$. Since $\iota$ is smooth, odd, and preserving $ \mathfrak{s} $, Theorem $\ref{thm1}$ implies the inequality
	\begin{align*}
	b_{+}^I (X) \geq - \frac{1}{16} \sigma (X).
	\end{align*}
On the other hand a simple calculation shows $b_{+}^I(X) = 0$ and $- \frac{1}{16} \sigma (X) = k \geq 1$, which is a contradiction.
\end{proof}

\subsection{$ \mathbb{Z}_2 \times \mathbb{Z}_2 $-actions}

The construction of the second example is similar to the first one.

Define another smooth involution $ \iota_0' $ on $ S^2 \times S^2 $ by
\begin{align*}
\iota_0' : S^2 \times S^2 \ni \left(\begin{pmatrix} x \\ y \\ z \end{pmatrix},\begin{pmatrix} x' \\ y' \\ z' \end{pmatrix} \right) \mapsto \left(\begin{pmatrix} x \\ y \\ z \end{pmatrix},\begin{pmatrix} -x' \\ -y' \\ z' \end{pmatrix} \right) \in S^2 \times S^2.
\end{align*} 
The fixed point set of $ \iota'_0 $ is $ S^2 \times \{ (0,0,1) , (0,0,-1) \} $.

We first take connected sum of five copies of $ \iota_0 $ on five copies of $ S^2 \times S^2 $. We denote these five $ S^2 \times S^2 $'s equipped with the action of $ \iota_0 $ by $ S_0 $, $ S_1 $, $ S_2 $, $ S_3 $ and $ S_4 $ respectively. We connect $ S_0 $, $ S_1 $ and $ S_2 $ at fixed points of $ \iota_0 $, and $ S_0 $, $ S_3 $ and $ S_4 $ at fixed points of $ \iota'_0 $ as in the previous subsection. Remark that the point $ p_1 $ in $ S_0 $ that connects to $ S_1 $ and the point $ p_2 $ in $S_0$ that connects to $ S_2 $ are exchanged by the action of $ \iota'_0 $. If we define $ p_3 $ and $ p_4 $ in a similar way, they are exchanged by $ \iota_0 $. Then we can define commutative two involutions on $ S_1 \sharp S_2 \sharp S_3 $. One is equal to $ \sharp^3 \iota_0 $ defined as before on $ (\sharp_{k=0}^5 S_k) \setminus \{ p_3, p_4 \} $ and exchanges $ S_3 $ and $ S_4 $. The other is defined similarly and it is equal to $ \sharp^3 \iota'_0 $ on $ (\sharp_{k=0}^5 S_k) \setminus \{ p_1, p_2 \} $ and exchanges $ S_1 $ and $ S_2 $. Using an abuse of notations we denote these involutions by the same notations $ \sharp^3 \iota_0 $ and $ \sharp^3 \iota'_0 $ respectively. See Figure 5.

\begin{figure}[h]
	\def \svgwidth{0.7\columnwidth}
	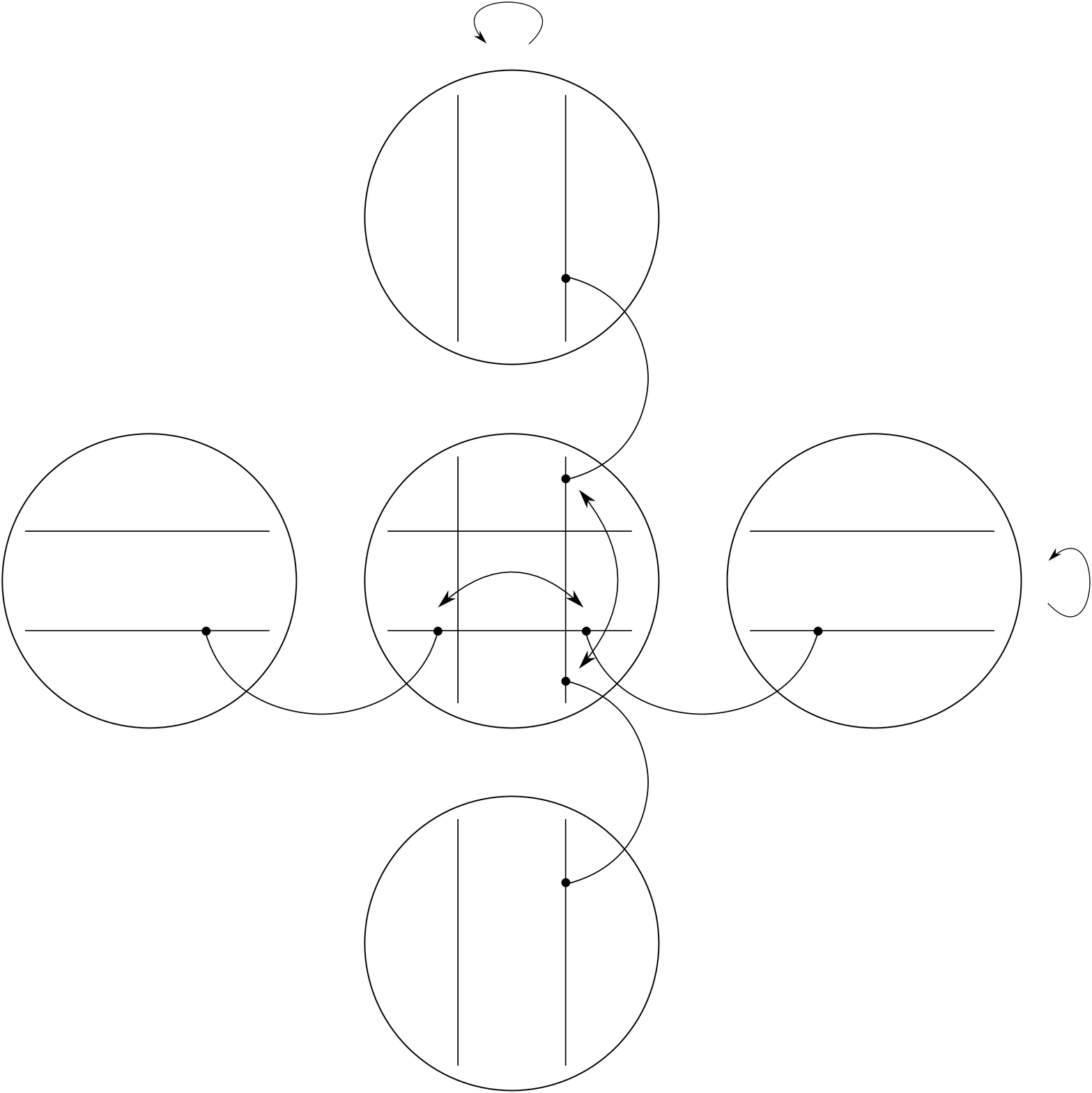
	\caption{}
\end{figure}

Similarly we take connected sum of $ 2l_1+2l_2+1 $-copies of $ S^2 \times S^2 $ and define commutative two involutions $ \sharp^{2l_1+1} \iota_0 $ and $ \sharp^{2l_2+1} \iota'_0 $. Set $Y := \sharp^{2l_1+2l_2+1}(S^2 \times S^2)$  In other words, we have a group action $ \rho : \mathbb{Z}_2 \times \mathbb{Z}_2 \to \mathrm{Diffeo}(Y) $ so that $ \rho(1,0) = \sharp^{2l_1+1} \iota_0 $ and $ \rho(0,1) = \sharp^{2l_2+1} \iota'_0 $.


The last step of the construction is to take connected sum of $ Y $ and four copies of $ k(-E_8) $. We denote four copies of $ k(-E_8) $ by $ W_1 $, $ W_2 $, $ W_3 $ and $ W_4 $ respectively and set $ Z := \coprod_{k=1}^4 W_k $. We define $ \lambda : \mathbb{Z}_2 \times \mathbb{Z}_2 \to \mathrm{Diffeo}(Z) $, an action of $ \mathbb{Z}_2 \times \mathbb{Z}_2 $ on $ Z $, as follows. The action of $ \lambda(1,0) $ exchanges $ W_1 $ and $ W_2 $, and also exchanges $ W_3 $ and $ W_4 $. Similarly the action of $ \lambda(0,1) $ exchanges $ W_1 $ and $ W_3 $, and also exchanges $ W_2 $ and $ W_4 $.

Take a point $ p \in Y $ outside the fixed point of $ \rho(1,0) $ and $ \rho(0,1) $, and also take a point $ q \in W_1 $. We modify $ Y $ as follows: remove small open balls $ B_p^1 $, $ B_p^2 $, $ B_p^3 $ and $ B_p^4 $ around $ p $, $ \rho(1,0)(p) $, $ \rho(0,1)(p) $ and $ \rho(1,1)(p) $. Similarly on $ Z $, remove small open balls $ B_q^1 $, $ B_q^2 $, $ B_q^3 $ and $ B_q^4 $ around $ q $, $ \lambda(1,0)(q) $, $ \lambda(0,1)(q) $ and $ \lambda(1,1)(q) $. Then we have two $ \mathbb{Z}_2 \times \mathbb{Z}_2 $-actions on $ \coprod_{k=1}^4 S^3 \cong \partial(Y \setminus \cup_{k=1}^4 B_p^k) \cong \partial(Z \setminus \cup_{k=1}^4 B_q^k) $ induced by $ \rho $ and $ \lambda $, and these two agree. So we can define a $ \mathbb{Z}_2 \times \mathbb{Z}_2 $-action on $ X := Y \sharp_{k=1}^4 W_k $ which is equal to $ \rho $ on $ Y \setminus \cup_{k=1}^4 B_p^k $ and $ \lambda $ on $ Z \setminus \cup_{k=1}^4 B_q^k $. We write this involution $ \sigma $ and define two commutative involutions on $ X $ as $ \iota_1 := \sigma(1,0) $ and $ \iota_2 := \sigma(0,1) $.


We show that $ \iota_1 $, $ \iota_2 $ and $ \iota_1 \circ \iota_2 $ are smoothable but $ \sigma $ is nonsmoothable for some parameters.

\begin{prp}
	$ \iota_1 $, $ \iota_2 $ and $ \iota_1 \circ \iota_2 $ are smoothable if $ l_1 \geq 3k $ and $ l_2 \geq 3k $.
\end{prp}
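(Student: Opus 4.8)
The plan is to construct, separately for each of $\iota_1$, $\iota_2$ and $\iota_1\circ\iota_2$, a smooth structure on $X$ for which that involution is smooth; the idea is to reorganise the connected-sum decomposition of $X$ so that the non-smoothable $-E_8$-summands are fused together --- along with enough $S^2\times S^2$-summands --- into copies of $K3$, which then carry the involution smoothly.

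First I would record how each involution permutes the summands. Among the $2l_1+2l_2+1$ copies of $S^2\times S^2$ forming $Y$, the map $\rho(1,0)$ acts as $\iota_0$ (hence with a $2$-dimensional fixed locus) on $2l_1+1$ of them and exchanges the remaining $2l_2$ copies in $l_2$ pairs, while on the $-E_8$-part $\iota_1=\sigma(1,0)$ acts freely, exchanging $\{W_1,W_2\}$ and $\{W_3,W_4\}$. The corresponding statement for $\iota_2$ is obtained by interchanging $l_1$ and $l_2$ and replacing $\iota_0$ by $\iota'_0$, with $\{W_1,W_3\}$ and $\{W_2,W_4\}$ exchanged; and $\iota_1\circ\iota_2$ acts on $Y$ as the even involution $\iota_0\circ\iota'_0$ on one central copy, exchanges the remaining $2l_1+2l_2$ copies in $l_1+l_2$ pairs, and acts freely on the $-E_8$-part exchanging $\{W_1,W_4\}$ and $\{W_2,W_3\}$. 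In particular, in each of the three cases the involution is free away from its fixed locus, it exchanges at least $3k$ pairs of $S^2\times S^2$-summands --- namely $l_2\geq 3k$ pairs for $\iota_1$, $l_1\geq 3k$ for $\iota_2$, and $l_1+l_2\geq 3k$ for $\iota_1\circ\iota_2$ --- and it permutes the four $-E_8$-summands freely in two pairs.

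The heart of the argument is the following rearrangement, which I carry out for $\iota_1$; the other two cases are identical. Presented as an equivariant connected sum, $(X,\iota_1)$ is built from a smooth base piece --- the connected sum at $\iota_0$-fixed points of $2l_1+1$ copies of $(S^2\times S^2,\iota_0)$, which is smooth with $2$-dimensional fixed locus --- with, attached at free orbits, $l_2$ pairs of $(S^2\times S^2)$-bubbles (each pair exchanged by $\iota_1$) together with two pairs of $(k(-E_8))$-bubbles. Choose $3k$ of these $S^2\times S^2$-bubble pairs, with representatives $B_1,\dots,B_{3k}$ and partners $B'_i=\iota_1(B_i)$. Since connected sum of oriented manifolds is associative and independent of the connecting data, and since all these bubbles lie in the free part of $\iota_1$, we may fuse $W_1$, $W_3$ and $B_1,\dots,B_{3k}$ into a single bubble $V=W_1\sharp W_3\sharp B_1\sharp\cdots\sharp B_{3k}$, with $\iota_1(V)=W_2\sharp W_4\sharp B'_1\sharp\cdots\sharp B'_{3k}$, both attached at one free orbit and exchanged by $\iota_1$. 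Because $W_1\sharp W_3$ is homeomorphic to $2k(-E_8)$, the summand $V$ is homeomorphic to $2k(-E_8)\sharp 3k(S^2\times S^2)$, which is in turn homeomorphic to the smooth manifold $\sharp^k(K3)$; hence $\iota_1(V)$ is smoothable as well. Thus $(X,\iota_1)$ is equivariantly homeomorphic to the equivariant connected sum of the smooth base piece, the remaining $l_2-3k$ pairs of $(S^2\times S^2)$-bubbles, and the pair $V$, $\iota_1(V)$ of $\sharp^k(K3)$-bubbles attached at a free orbit. Every piece and every gluing in this presentation is smooth, so it endows $X$ with a smooth structure for which $\iota_1$ is smooth.

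The step requiring care --- the main obstacle --- is the justification of this rearrangement: that the equivariant homeomorphism type of $(X,\iota_1)$ is unchanged when the connected sums defining it are performed in a different order and at different points of the free locus, so that we may legitimately present it with the convenient decomposition above. This is the equivariant analogue of the elementary fact that connected sum of oriented manifolds does not depend on the connecting data, and it goes through because every move is supported where the involution acts freely, hence descends to an ordinary rearrangement on the quotient manifold. Once this is granted, the only quantitative input is that $2k$ copies of $-E_8$ together with $3k$ copies of $S^2\times S^2$ assemble into $\sharp^k(K3)$, which is exactly where the hypotheses $l_1\geq 3k$ and $l_2\geq 3k$ enter.
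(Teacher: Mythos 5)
Your proposal is correct and follows essentially the same route as the paper: both rearrange the connected-sum decomposition in the free locus so that the $\sharp^k(-E_8)$ summands absorb $3k$ exchanged $S^2\times S^2$ summands to become copies of $K3$ (via Freedman's classification), presenting each involution as a connected sum of a smooth fixed-locus piece and a swap of two smoothable pieces. The paper phrases the rearrangement as moving the attachment points $p$ (invariance of the equivariant homeomorphism type under that change), which is exactly the sliding-in-the-free-part step you isolate and justify, so there is no substantive difference.
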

\begin{proof}
	We show the smoothability of $ \iota_1 $. In the construction of the action $ \sigma $, the homeomorphism class of $X$ is invariant even if we the points $p$ at which take connected sum with $ W_1 $. So we choose $ p $ outside the horizontal component of $ \sharp^{2l_1+2l_2+1} (S^2 \times S^2) $ in the Figure 6. Then the upper part of $ X $ in Figure 6 is homeomorphic to $ l_2 (S^2 \times S^2) \sharp 2k(-E_8) \setminus \mathrm{small ball} $ and it is homeomorphic to $ (l_2 - 3k)(S^2 \times S^2) \sharp k(K3) \setminus \mathrm{small ball} $ which has at least one smooth structure. a similar is applied to the lower part of $X$ in Figure 6. So $ \iota_1 $ is topologically conjugate to the connected sum of two involutions at points outside the fixed point set. One is $ \sharp^{2l_1+1} \iota_0 $ on $ \sharp^{2l_1+1} (S^2 \times S^2)$ and the other is the exchange of the two components of $ (l_2 - 3k)(S^2 \times S^2) \sharp k(K3) \coprod (l_2 - 3k)(S^2 \times S^2) \sharp k(K3) $. These two involution are smooth for some smooth structures. So $ \iota_1 $, their connected sum, is also smooth for some smooth structure on $ X $.
	
	The smoothability of $ \iota_2 $ and $ \iota_1 \circ \iota_2 $ are proved in a similar way.
\end{proof}

\begin{figure}[h]
	\def \svgwidth{\columnwidth}
	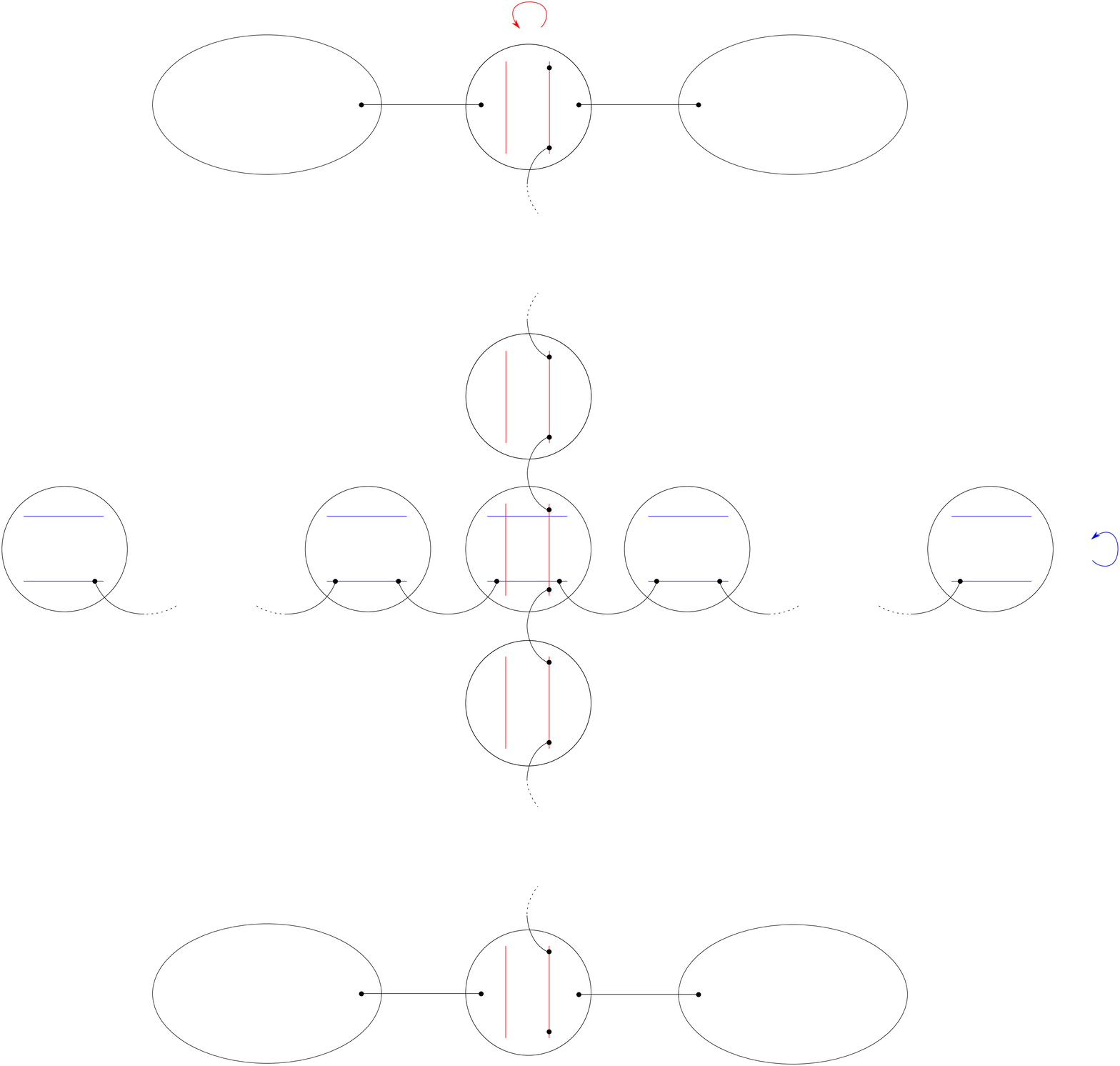
	\caption{}
\end{figure}

\begin{prp}
	$ \sigma $ is nonsmoothable if $ k \geq 1 $.
\end{prp}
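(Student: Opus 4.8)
The plan is to argue by contradiction using Theorem~\ref{thm2}. Suppose $\sigma$ were smoothable and fix a smooth structure on $X$ for which $\sigma$ is a smooth $\Zbb_2 \times \Zbb_2$-action; then $\iota_1 = \sigma(1,0)$ and $\iota_2 = \sigma(0,1)$ become commuting smooth involutions. Since $X$ is a connected sum of copies of $S^2 \times S^2$ and of the $E_8$-manifold, its intersection form is even, so $X$ is spin, and $X$ is simply connected, so it carries a unique spin structure, automatically preserved by $\iota_1$ and $\iota_2$. From the construction the fixed-point set of $\iota_1$ contains a $2$-dimensional component---a copy of $\{(0,0,\pm 1)\} \times S^2$ sitting in $S_0$, where $\iota_1$ acts as $\iota_0$---and likewise for $\iota_2$ via $\iota_0'$; hence $\iota_1$ and $\iota_2$ are odd by the lemma in Section~2. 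On the other hand $\iota_1 \circ \iota_2$ acts on $S_0$ as $\iota_0 \circ \iota_0'$, whose fixed-point set is the four isolated points $\{(0,0,\pm 1)\} \times \{(0,0,\pm 1)\}$, and it freely permutes the summands of every other building block, so $X^{\iota_1 \circ \iota_2}$ is a nonempty finite set and $\iota_1 \circ \iota_2$ is even. Finally $\sigma(X) = -32k \le 0$. Thus all hypotheses of Theorem~\ref{thm2} are in force.

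The main work is the computation $b_+^{\langle I_1, I_2 \rangle}(X) = 0$. Writing $X = Y \sharp W_1 \sharp W_2 \sharp W_3 \sharp W_4$, one has a decomposition $H^2(X;\Rbb) = H^2(Y;\Rbb) \oplus \bigoplus_{k=1}^{4} H^2(W_k;\Rbb)$ that is orthogonal for $Q$ and invariant under $\langle I_1, I_2 \rangle$, because $\iota_1, \iota_2$ carry $Y$ to itself and permute $W_1, \dots, W_4$. On $H^2(Y;\Rbb) = \bigoplus_j H^2(S_j;\Rbb)$ the action further preserves the block decomposition into $S_0$, the $l_1$ pairs exchanged by $\iota_2$, and the $l_2$ pairs exchanged by $\iota_1$. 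For each of these blocks, one of the two generators acts on every $S^2 \times S^2$ factor of the block as $\iota_0$ or $\iota_0'$; these are rotations in one sphere factor, hence homotopic to the identity, so that generator acts as the identity on the cohomology of the block and the corresponding $I_j = -\iota_j^{*}$ acts there as $-\mathrm{id}$. Therefore the $\langle I_1, I_2 \rangle$-invariant subspace of each block is zero, and $H^2(Y;\Rbb)^{\langle I_1, I_2 \rangle} = 0$. On $\bigoplus_{k=1}^{4} H^2(W_k;\Rbb)$ the involutions $\iota_1, \iota_2$ permute $W_1, \dots, W_4$ by the regular representation of $\Zbb_2 \times \Zbb_2$, and a direct computation shows that the subspace on which both $I_1$ and $I_2$ act as $+\mathrm{id}$ is the ``anti-diagonal'' consisting of $u_1 \oplus u_2 \oplus u_3 \oplus u_4$ with $u_2 = -u_1$, $u_3 = -u_1$, $u_4 = u_1$ (under the orientation-preserving gluing identifications $H^2(W_1;\Rbb) \cong H^2(W_k;\Rbb)$); on this $8k$-dimensional space $Q$ equals $4$ times the intersection form of $W_1 = k(-E_8)$, which is negative definite. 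Hence $H^2(X;\Rbb)^{\langle I_1, I_2 \rangle}$ is negative definite and $b_+^{\langle I_1, I_2 \rangle}(X) = 0$.

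Feeding these into Theorem~\ref{thm2} and discarding the non-negative index term yields
\[
0 \;=\; b_+^{\langle I_1, I_2 \rangle}(X) \;\ge\; -\frac{\sigma(X)}{32} + \frac{|\mathrm{index}_{I_1 \circ I_2} D|}{8} \;\ge\; -\frac{\sigma(X)}{32} \;=\; k \;\ge\; 1,
\]
a contradiction; hence $\sigma$ is not smoothable. The only genuinely nontrivial step is the vanishing of $b_+^{\langle I_1, I_2 \rangle}(X)$: one must pin down how each generator acts on the cohomology of every $S^2 \times S^2$ summand (using that $\iota_0$ and $\iota_0'$ are homotopic to the identity) and recognize the $\langle I_1, I_2 \rangle$-fixed part of the $(-E_8)$-block as a negative-definite anti-diagonal. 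Everything else---the parities of $\iota_1$, $\iota_2$ and $\iota_1 \circ \iota_2$, the value $\sigma(X) = -32k$, and the application of Theorem~\ref{thm2}---is routine, and Proposition~\ref{prp1} is not needed here since the index contribution only strengthens the bound.
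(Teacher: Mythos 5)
Your proposal is correct and follows essentially the same route as the paper: assume a smooth structure making $\sigma$ smooth, note $\iota_1,\iota_2$ are odd and $\iota_1\circ\iota_2$ is even, apply Theorem \ref{thm2} with $\sigma(X)=-32k$ and $b_+^{\langle I_1, I_2\rangle}(X)=0$, and derive $0\geq k$. The only differences are cosmetic and both legitimate: you supply the block-by-block verification of $b_+^{\langle I_1, I_2\rangle}(X)=0$ (including the negative-definite anti-diagonal in the four $k(-E_8)$ summands), which the paper dismisses as ``a simple calculation,'' and you discard the term $|\mathrm{index}_{I_1\circ I_2}D|/8$ as non-negative rather than computing it to vanish via Proposition \ref{prp1} and an invariant positive-scalar-curvature metric on $S^2\times S^2$ as the paper does.
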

\begin{proof}
	Suppose that there exists a smooth structure on $ X $ so that $ \rho $ is smooth. Since $ X $ satisfies $ b_1 = 0 $, it has a unique spin structure $ \mathfrak{s} $. Note that $ \iota_1 $ and $ \iota_2 $ are odd, smooth and preserving the spin structure $ \mathfrak{s} $. The fixed point set of $ \iota_1 \circ \iota_2 $ is $ \{ (0,0,1),(0,0,-1) \} \times \{ (0,0,1),(0,0,-1) \} $. Since this fixed point set is a 0-dimensional submanifold of $ X $, which implies that $\iota_1 \circ \iota_2$ is an even involution.
	Theorem $\ref{thm2}$ implies
	\begin{align*}
	b_+^{\langle I_1, I_2 \rangle} (X) \geq - \frac{1}{32} \sigma(X) + \frac{1}{8} \mathrm{index}_{I_1 \circ I_2} D.
	\end{align*}
On the other hand a simple calculation shows $b_+^{\langle I_1, I_2 \rangle} (X) = 0$. From Proposition $\ref{prp1}$, $\mathrm{index}_{I_1 \circ I_2} D$ is calculated by the data around the fixed points, which is reduced to the case for an involution on $S^2 \times S^2$. Since $S^2 \times S^2$ allows positive scalar curvature metric which is invariant under the involution, $\mathrm{index}_{I_1 \circ I_2} D$ is equal to $0$.
	So we obtain $ 0 \geq k $, which is a contradiction if $ k \geq 1 $.
\end{proof}


\section{An equivariant version of the Seiberg-Witten theory}

\subsection{A review of the Seiberg-Witten theory}

Let $ X $ be a closed oriented spin 4-manifold with a Riemannian metric, and $ SO(X) $ be its orthonormal frame bundle. Then, a spin structure $ \mathfrak{s} $ of $ X $ is a pair of $ Spin(4) $-bundle $ P $ and a map $ \pi : P \to SO(X) $ which commutes the following diagram.
\begin{align*}
\xymatrix{ P \ar[rr]^{\pi} \ar[rd]_{Spin(4)} & & SO(X) \ar[ld]^{SO(4)} \\
	 & X &
}
\end{align*}

Let $ (\Delta_\pm, \mathbb{H}) $ be the complex representations of $ Spin(X) $ defined by
\begin{align*}
\Delta_\pm : Spin(4) \cong Sp(1) \times Sp(1) \ni (q_+,q_-) \mapsto (\mathbb{H} \ni x \mapsto q_\pm x \in \mathbb{H}) \in GL(\mathbb{H}).
\end{align*}
where complex structure of $ \mathbb{H} $ is given by right multiplication. We call the associated vector bundles $ S^\pm := P \times_{\Delta_\pm} \mathbb{H} $ spinor bundles. $\gamma ([u, (x \otimes a, \phi_+)]) := I ([u, x \phi_+ \bar{a}])$. Let $\omega$ be the connection on $ P $ that is defined to be the pull-back of Levi-Civita connection by $ \pi : P \to SO(X) $.
Let $A$ be a connection on $P$ of the form $A = \omega + a$ for some $a \in i\Omega^1(X)$.
We denote the Dirac operator twisted with $A$ by $D_A : \Gamma(S^{+}) \to \Gamma(S^{-})$.
Let $d^* : \Omega^2 (X) \to \Omega^1 (X)$ be the formal adjoint of $d : \Omega^1(X) \to \Omega^2(X)$.
Let $d^{+} : \Omega^1 (X) \to \Omega^{+} (X)$ be $d^{+} := (d + *d)/2$ for the exterior derivative $d : \Omega^1(X) \to \Omega^2(X) $ and the Hodge star operation $*$.
We define $\sigma : \Gamma(S^{+}) \to i\Omega^{+} (X)$ by the formula
\begin{align*}
\sigma([u,\phi]) = [u,\phi i \bar{\phi}] \in P \times_{Spin(4)} \mathrm{Im} \mathbb{H} \cong i\Lambda^{+}
\end{align*}
for $[u,\phi] \in P \times_{\Delta_{+}} \mathbb{H}$.

Now we define the monopole map $SW : i\mathrm{Im}d^* \times \Gamma(S^{+}) \to i\Omega^{+} (X) \times \Gamma(S^{-})$ by the formula
\begin{align*}
SW(a,\phi) = (d^{+}a - \sigma(\phi),D_a \phi) \in i\Omega^{+} (X) \times \Gamma(S^{-})
\end{align*}
for $(a,\phi) \in  i\mathrm{Im}d^* \times \Gamma(S^{+})$.

\begin{rem}
The monopole map $SW$ defined above has a natural extension to a map refined on $i\Omega^1(X) \times \Gamma(S^{+})$, which is actually called monopole map. The above map $SW$ is a restriction of the usual monopole map on $i\mathrm{Im}d^* \times \Gamma(S^{+})$. \end{rem}

\subsection{Odd involution}\label{Odd}

Suppose $ \iota $ is an involution on $ X $ preserving the isomorphism class spin structure $ \mathfrak{s}$. Let $\iota_{*} : SO(X) \to SO(X) $ be the cannonical lift of $\iota$. Then there exists a $ Spin(4) $-equivariant map $ \tilde{\iota} : P \to P $ satisfying the following commutative diagram.
\begin{align*}
\xymatrix@!{
	P \ar[r]^{\tilde{\iota}} \ar[d]^{\pi} & P \ar[d]^{\pi} \\
	SO(X) \ar[r]^{\iota_{*}} & SO(X)
}
\end{align*}

We assume that $\iota$ is an odd involution. A sufficient condition for $\iota$ to be odd is given by the following lemma.

\begin{lem}\label{lem1}
	 If the fixed point set of $\iota$ contains 2-dimensional component,  $ \iota $ is odd. 
\end{lem}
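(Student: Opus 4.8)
The plan is to work locally near a point $x$ of a $2$-dimensional component $F$ of the fixed point set $X^\iota$, and to compute the square of the induced map $\tilde\iota$ on the fiber of the principal $Spin(4)$-bundle $P$ there. Since $\iota$ fixes $x$ and $F$ is $2$-dimensional, the differential $d\iota_x$ acts on $T_xX$ as an involution whose $(+1)$-eigenspace is $T_xF$ (two-dimensional) and whose $(-1)$-eigenspace is the normal plane $N_xF$ (also two-dimensional). Choosing an orthonormal frame adapted to the splitting $T_xX = T_xF \oplus N_xF$, the element $d\iota_x \in SO(4)$ is conjugate to $\mathrm{diag}(1,1,-1,-1)$; in particular it is a rotation by $\pi$ in the normal plane, so it lifts to $SO(X)$ at $x$ as a specific element of $SO(4)$, and the canonical lift $\iota_*$ realizes exactly this element on the fiber $SO(X)_x$.

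Next I would transport this to the spin level. The key computation is: the preimage in $Spin(4)$ of the rotation $\mathrm{diag}(1,1,-1,-1) \in SO(4)$ consists of two elements $\pm g$, and one checks directly — writing $Spin(4) \cong Sp(1)\times Sp(1)$ and using that the rotation by $\pi$ in a coordinate $2$-plane corresponds to a pair $(q_+, q_-)$ of unit quaternions each equal to $\pm i$ (up to the identification) — that $g^2 = -1$ in $Spin(4)$. Concretely, a rotation by angle $\theta$ in a $2$-plane lifts to a path $g(\theta)$ in $Spin(4)$ with $g(2\pi) = -1$; taking $\theta = \pi$ gives $g(\pi)^2 = g(2\pi) = -1$. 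Because $\tilde\iota$ is a $Spin(4)$-equivariant lift of $\iota_*$ and $x$ is fixed, $\tilde\iota$ restricted to the fiber $P_x$ is given by right multiplication by one of the two lifts of $d\iota_x$, i.e.\ by $\pm g$ with $g^2 = -1$; hence $\tilde\iota \circ \tilde\iota$ acts on $P_x$ as multiplication by $g^2 = -1$, which is the central element, so $\tilde\iota\circ\tilde\iota = -1$ as a global map (it is $Spin(4)$-equivariant, a lift of the identity, hence constant equal to $\pm 1$, and we have just determined it to be $-1$ on $P_x$). By definition this means $\iota$ is odd.

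The main obstacle — really the only subtle point — is pinning down precisely which element of the central $\Zbb/2 = \{\pm 1\} \subset Spin(4)$ the lift $\tilde\iota$ squares to, i.e.\ verifying $g(\pi)^2 = -1$ rather than $+1$ for the relevant lift. This is exactly the statement that a rotation by $\pi$ in a $2$-plane does not lift to an element of order $2$ in $Spin(4)$ but to an element of order $4$; it follows from the standard fact that the generator of $\pi_1(SO(4))$ is represented by a full loop of rotations in a $2$-plane and lifts to a path from $1$ to $-1$ in $Spin(4)$. One must be a little careful that the \emph{canonical} lift $\iota_*$ of $d\iota_x$, built from the metric, is indeed a rotation (orientation-preserving on $T_xX$), which holds because $\iota$ preserves the orientation of $X$ while reversing orientation on the $2$-plane $N_xF$, hence also reversing it on $T_xF$ — wait: $\iota$ fixes $T_xF$ pointwise, so it preserves orientation there; then it must reverse orientation on $N_xF$ to be orientation-preserving overall, consistent with $d\iota_x = \mathrm{diag}(1,1,-1,-1) \in SO(4)$. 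Once this local normal form is fixed, the computation of $g(\pi)^2$ is routine and independent of the chosen component and point, so oddness follows. (The companion statement, that a $0$-dimensional fixed component forces $\iota$ to be even, is proved the same way: there $d\iota_x = -\mathrm{id} = \mathrm{diag}(-1,-1,-1,-1) \in SO(4)$, whose lift is $(q_+,q_-) = (\pm1,\pm1)$ — a rotation by $\pi$ in two orthogonal $2$-planes simultaneously — squaring to $(-1)(-1) = +1$ in $Spin(4) \cong Sp(1)\times Sp(1)$.)
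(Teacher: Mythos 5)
Your proof is correct and takes essentially the same route as the paper's: both reduce to the local normal form at a fixed point of the $2$-dimensional component, where the canonical lift acts on frames by a rotation by $\pi$ in the normal plane, and observe that either lift of this rotation to $Spin(4)$ squares to the central element $-1$, so $\tilde{\iota} \circ \tilde{\iota} = -1$. (One inessential slip in your orientation aside: $\mathrm{diag}(-1,-1)$ \emph{preserves} the orientation of $N_xF$, and in any case $\det d\iota_x = +1$ is automatic since an involution with $2$-dimensional fixed locus has eigenvalues $1,1,-1,-1$.)
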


\begin{proof}
	It suffices to show the relation $\tilde{\iota} \circ \tilde{\iota} = -1$ on a fixed point of $\iota$.
	$ \iota_* : SO(X) \to SO(X) $ is locally equivariantly diffeomorphic to the following action around a fixed point in the 2-dimensional component.
	\begin{align*}
	\mathbb{R}^4 \times SO(4) \ni \left( \begin{pmatrix} x \\ y \\ z \\ w \end{pmatrix}, A \right) \mapsto \left( \begin{pmatrix} -x \\ -y \\ z \\ w \end{pmatrix}, \begin{pmatrix} -1 & & & \\ & -1 & & \\ & & 1 & \\ & & & 1 \end{pmatrix} A \right) \in \mathbb{R}^4 \times SO(4).
	\end{align*}
	Recall that a double covering map $ Spin(4) \cong Sp(1) \times Sp(1) \to SO(4) $ is given by
	\begin{align*}
	Sp(1) \times Sp(1) \ni (q_+, q_-) \mapsto (\mathbb{H} \ni x \mapsto q_- x q_+^{-1} \in \mathbb{H}) \in SO(4)
	\end{align*}
	In this fomulation $ \tilde{\iota} $ is locally described by
	\begin{align*}
	\mathbb{R}^4 \times Spin(4) \ni \left( \begin{pmatrix} x \\ y \\ z \\ w \end{pmatrix}, (q_+, q_-) \right) \mapsto \left( \begin{pmatrix} -x \\ -y \\ z \\ w \end{pmatrix}, \pm (q_+ i, q_- i) \right) \in \mathbb{R}^4 \times Spin(4),
	\end{align*}
	which implies that $ \tilde{\iota} \circ \tilde{\iota} = -1 $ around fixed points. 
\end{proof}

We define involutions on three spaces; the space of differential forms $ i\Omega^{*}(X) $, the spinor bundles $ S^{\pm} $ and the gauge group $ \mathcal{G} := \mathrm{Map}(X,Pin(2)) $. We denote them by the same notation $I$ and define $I$ by the following formula 
\begin{align*}
I(a) &:= -\iota^{*}a \\
I([u,\phi]) &:= [\tilde{\iota}(u),\phi j^{-1}] \\
I(g) &:= j\iota^{*}gj^{-1}
\end{align*}
 for $ a \in i\Omega^{*}(X) $, $ [u,\phi] \in P \times_{\Delta_{\pm}} \mathbb{H} = S^{\pm} $ and $ g \in \mathcal{G} $.
 
Since $\tilde{\iota} \circ \tilde{\iota} = -1$ and $j ^2 = -1$, $I$ is an involution on the spinors. The involutions $ I : S^{\pm} \to S^{\pm} $ induces involutions on the space of the sections of spinor bundles $ \Gamma(S^{\pm}) $, we also denote these involutions by $ I : \Gamma(S^{\pm}) \to \Gamma(S^{\pm})$.

\begin{lem}
The Clifford multiplication $ \gamma $, the quadratic map $ \sigma $, gauge action, and the Dirac operator $ D $ are $ I $-equivariant.
\end{lem}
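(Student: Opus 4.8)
The claim is that the Clifford multiplication $\gamma$, the quadratic map $\sigma$, the gauge action, and the Dirac operator $D$ are all $I$-equivariant, where $I$ acts on $i\Omega^*(X)$, on $S^\pm$, and on $\mathcal{G}$ by the three formulas just displayed. The plan is to verify each of the four items in turn, in each case by unwinding the defining formula at the level of representatives $[u,\phi]$ in $P\times_{\Delta_\pm}\mathbb{H}$ and using the two algebraic facts $\tilde\iota\circ\tilde\iota=-1$ and $j^2=-1$, together with the quaternionic identities relating the complex structure (right multiplication by $i$), the $\mathrm{Pin}(2)$-factor (left multiplication by $j$), and the $Spin(4)$-action.

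First I would treat $\gamma$. Using the stated formula $\gamma([u,(x\otimes a,\phi_+)])=[u,x\phi_+\bar a]$ and the definition $I([u,\phi])=[\tilde\iota(u),\phi j^{-1}]$, I would compute both $I(\gamma([u,(x\otimes a,\phi_+)]))$ and $\gamma$ applied to the $I$-image of $[u,(x\otimes a,\phi_+)]$ on $\Lambda^1\otimes S^+$ (where $I$ acts on the $\Lambda^1$ factor via $-\iota^*$ and on $S^+$ via the spinor formula), and check that the two agree; the point is that right multiplication by $j^{-1}$ commutes past $x$ and $\bar a$ appropriately and that the sign twist coming from $-\iota^*$ on one-forms is exactly compensated by the $\tilde\iota$-twist and the reordering of quaternions. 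The computation for $\sigma([u,\phi])=[u,\phi i\bar\phi]$ is of the same flavour: one checks $\sigma(I[u,\phi])=[\tilde\iota(u),(\phi j^{-1})\,i\,\overline{(\phi j^{-1})}]=[\tilde\iota(u),\phi j^{-1} i j \bar\phi]$ and uses $j^{-1}ij=-i$ (since $i$ and $j$ anticommute) to see this equals $-[\tilde\iota(u),\phi i\bar\phi]$, i.e. $I$ acting on $i\Lambda^+$, which by definition is $-\iota^*$; so $\sigma$ is equivariant as a map to $i\Omega^+(X)$. The gauge action is the easiest: with $g\cdot[u,\phi]=[u,g\phi]$ (left multiplication by $\mathrm{Pin}(2)$-valued function), one checks $I(g)\cdot I([u,\phi])=[\tilde\iota(u),(j\iota^*g j^{-1})(\phi j^{-1})]=[\tilde\iota(u),(j\iota^*g)(\phi j^{-2})]$, and comparing with $I(g\cdot[u,\phi])=I([u,g\phi])=[\tilde\iota(u),(g\phi)j^{-1}]$ after transporting $g$ through $\tilde\iota$; the two match because $j^{-2}=j^{-1}\cdot j^{-1}$ is absorbed and the $\mathrm{Pin}(2)$ factor commutes with right multiplication.

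The last and main item is the Dirac operator $D$. Here I would argue that $D$ is built from the Levi-Civita connection $\omega$ (which is the pullback of the canonical connection and hence is $\iota_*$-invariant, so $\tilde\iota$-invariant on $P$) together with the Clifford multiplication and the exterior derivative; since $\nabla$ intertwines the $\tilde\iota$-action on $\Gamma(S^+)$ and the $(-\iota^*,\,\tilde\iota)$-action on $\Gamma(\Lambda^1\otimes S^+)$ — the two minus signs from $-\iota^*$ on $\Lambda^1$ and the twist in the spinor formula being arranged to cancel in the contraction — and since $\gamma$ is $I$-equivariant by the previous step, the composite $D=\gamma\circ\nabla$ is $I$-equivariant. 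The one genuinely delicate point, and the step I expect to be the main obstacle, is bookkeeping the signs: one must be careful that the $-1$ in $\tilde\iota\circ\tilde\iota=-1$ and the sign in $I(a)=-\iota^*a$ are compatible in such a way that all four maps become honestly equivariant (rather than equivariant up to an overall sign that would vary between $S^+$ and $S^-$); this is exactly where the \emph{oddness} of $\iota$ enters, and is the reason the lemma is false for even involutions with the same formulas. I would organize the sign bookkeeping once and for all by passing to the local model of Lemma~\ref{lem1}, where $\tilde\iota$ is explicitly $(q_+,q_-)\mapsto\pm(q_+i,q_-i)$, and checking the identities there; equivariance being a pointwise/local statement, this suffices.
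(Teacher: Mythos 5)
Your verifications of $\gamma$ and $\sigma$ are correct and are essentially the paper's own argument: the paper checks only $\gamma$, by the same representative-level computation $I\gamma([u,(x\otimes a,\phi_+)])=[\tilde{\iota}(u),x\phi_+\bar a j^{-1}]=\gamma I([u,(x\otimes a,\phi_+)])$, and declares $\sigma$ and $D$ to be "similar"; your treatment of $D=\gamma\circ\nabla$ via the $\tilde{\iota}$-invariance of the pulled-back Levi-Civita connection is a reasonable way to fill in that last step and is, if anything, more explicit than the paper.

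Three specific points in your plan are off, however. First, the gauge action cannot be left multiplication $[u,\phi]\mapsto[u,g\phi]$: the structure group $Spin(4)$ already acts on $\mathbb{H}$ by left multiplication via $\Delta_\pm$, so left multiplication by a $Pin(2)$-valued function is not well defined on $P\times_{\Delta_\pm}\mathbb{H}$. The gauge action is by right multiplication (which commutes with $\Delta_\pm$), and then the conjugation by $j$ in $I(g)=j\,\iota^*g\,j^{-1}$ is precisely what makes the check work, e.g.\ with $g\cdot[u,\phi]=[u,\phi\bar g]$ one has $\phi j^{-1}\,\overline{jgj^{-1}}=\phi\,\bar g\,j^{-1}$; as written, your verification of this item does not go through. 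Second, oddness is not what makes the equivariance hold: none of the four computations uses $\tilde{\iota}\circ\tilde{\iota}=-1$. That relation, together with $j^2=-1$, is used only to ensure that $I$ is an \emph{involution} on the spinors; for an even involution the same formulas still give equivariant maps, but $I$ then has order $4$, so your claim that "the lemma is false for even involutions with the same formulas" is incorrect and misplaces where oddness enters the paper. Third, the local model of Lemma \ref{lem1} describes $\tilde{\iota}$ only near a fixed point lying in a two-dimensional component of $X^{\iota}$, so it cannot be used "once and for all" to check equivariance over all of $X$; the quaternionic identities you already invoke for $\gamma$ and $\sigma$ do the job globally and need no local model.
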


\begin{proof}
	The equivariance of $ \gamma : \Lambda^1_\mathbb{C} \times S^+ \to S^- $ is shown in the following way Let $ [u, (x \otimes a,\phi_+)] $ be an element of $ P \times_{Spin(4)} (\mathbb{H}_T \otimes \mathbb{C}, \mathbb{H}_+) \cong \Lambda^1_\mathbb{C} \times S^+ $. Then we have
	\begin{align*}
	I \gamma ([u, (x \otimes a, \phi_+)]) &= I ([u, x \phi_+ \bar{a}]) = [\tilde{\iota}(u), x \phi_+ \bar{a} j^{-1}], \\
	 \gamma I ([u, (x \otimes a, \phi_+)]) &= \gamma ([\tilde{\iota}(u), (x \otimes \bar{a}, \phi_+ j^{-1})]) \\ &= [\tilde{\iota}(u), x \phi_+ j^{-1} a ] = [\tilde{\iota}(u), x \phi_+ \bar{a} j^{-1}].
	\end{align*}
Similarly we can check the equivariance of $\sigma$ and $D$ straightforwardly.
\end{proof}

\begin{cor}
	the following diagram is commutative and $ I $-equivariant.
	\begin{align*}
	\xymatrix{
		Pin(2) \times ( i\mathrm{Im(d^{*})} \times \Gamma (S^{+}) ) \ar[r]^{\mathrm{id} \times SW} \ar[d]^{\mbox{\rm{gauge action}}} & Pin(2) \times ( i\Omega^{+} (X) \times \Gamma (S^{-}) ) \ar[d]^{\mbox{\rm{gauge action}}} \\
		i\mathrm{Im(d^{*})} \times \Gamma (S^{+}) \ar[r]^{SW} & i\Omega^{+} (X) \times \Gamma (S^{-})
	}
	\end{align*}
\end{cor}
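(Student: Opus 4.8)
The plan is to observe that this corollary is really an assembly of the preceding lemma together with the standard gauge-equivariance of the Seiberg--Witten monopole map, so the proof splits into two independent checks: commutativity of the square, and $I$-equivariance of each of its four arrows.

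For commutativity, I would note that the square expresses exactly the fact that $SW$ intertwines the residual (constant) $Pin(2)$-gauge action that survives after imposing the Coulomb condition $a \in i\,\mathrm{Im}\,d^{*}$. Writing $SW(a,\phi) = (d^{+}a - \sigma(\phi),\, D_{a}\phi)$, a constant $g \in Pin(2)$ fixes the slice $i\,\mathrm{Im}\,d^{*}$ and sends $\phi$ to $g\phi$; then $d^{+}a$ is unchanged, $\sigma(g\phi) = g\,\sigma(\phi)\,g^{-1}$ matches the induced action of $g$ on $i\Omega^{+}(X) \cong P\times_{Spin(4)}\mathrm{Im}\,\mathbb{H}$, and $D_{a}(g\phi) = g\,D_{a}\phi$ since $g$ is constant and hence commutes with $D_{a}$. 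This is just the restriction of the well-known equivariance of the full monopole map, so I would invoke that rather than recompute it.

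For the $I$-equivariance it suffices to check it arrow by arrow. The identity on the $Pin(2)$-factor is $I$-equivariant by definition of the (conjugation) action of $I$ on $Pin(2)$ coming from $I(g) = j\iota^{*}gj^{-1}$. The two vertical gauge-action maps are $I$-equivariant: this is precisely the content of the preceding lemma, the definitions of $I$ on $\mathcal{G}$, on $S^{\pm}$ and on $i\Omega^{*}(X)$ having been arranged exactly so that the gauge action becomes $I$-equivariant. The bottom arrow $SW$ is $I$-equivariant because each of its three constituents is: $d^{+}$ commutes with $I$, since $I = -\iota^{*}$ on forms and $\iota$ is orientation-preserving, so $\iota^{*}$ commutes with both $d$ and the Hodge star, hence with $d^{+} = (d + *d)/2$; and $\sigma$ and $D$ are $I$-equivariant by the lemma. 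The top arrow $\mathrm{id}\times SW$ is then $I$-equivariant by combining these, and commutativity together with $I$-equivariance gives the claim.

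The argument is essentially bookkeeping, so there is no serious mathematical obstacle; the one place to stay careful is consistency of conventions --- on which side $Pin(2)$ acts on the quaternionic spinor fibres, how the $j^{\pm 1}$ factors in the definition of $I$ interact with the gauge action, and the identification of the residual constant gauge group in the finite-dimensional reduction with the global gauge group $\mathrm{Map}(X,Pin(2))$ --- along with the (elementary) fact that $\iota^{*}$ commutes with the Hodge star, which is where orientation-preservation of $\iota$ is used.
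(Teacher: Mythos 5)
Your proposal is correct and follows the same route the paper intends: the paper states this corollary without proof as an immediate consequence of the preceding lemma (equivariance of $\gamma$, $\sigma$, the gauge action and $D$), and your arrow-by-arrow check --- invoking that lemma, adding that $d^{+}$ commutes with $I=-\iota^{*}$ because $\iota$ preserves orientation and (after averaging) the metric, and noting the standard $Pin(2)$-gauge equivariance of $SW$ on the Coulomb slice --- is exactly the bookkeeping being left to the reader.
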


It implies that the following diagram is also commutative.
\begin{align*}
\xymatrix{
	Pin(2)^I \times ( i\mathrm{Im(d^{*})}^I \times \Gamma (S^{+})^I ) \ar[r]^{\mathrm{id} \times SW^I} \ar[d]^{\mbox{gauge action}} & Pin(2)^I \times ( i\Omega^{+} (X)^I \times \Gamma (S^{-})^I ) \ar[d]^{\mbox{gauge action}} \\
	i\mathrm{Im(d^{*})}^I \times \Gamma (S^{+})^I \ar[r]^{SW^I} & i\Omega^{+} (X)^I \times \Gamma (S^{-})^I
}
\end{align*}
where the $ I $-indexed objects are $ I $-invariant part. And this means that $ SW^I $, the restriction of the monopole map $ SW $ to the $ I $-invariant part, is $ Pin(2)^I $-equivariant. And by the definition of the involution $ I $ on the gauge group, $ Pin(2)^I = \langle j \rangle $ and this is abstractly isomorphic to $ \mathbb{Z}_4 $ as group. So we have an $ \mathbb{Z}_4 $-equivariant map
\begin{align*}
SW^I : i\mathrm{Im(d^{*})}^I \times \Gamma (S^{+})^I \to i\Omega^{+} (X)^I \times \Gamma (S^{-})^I.
\end{align*}
We call this $ I $-invariant monopole map.

\section{Proof of Main Theorems}

In this section we prove Theorem $ \ref{thm1} $ and Theorem $ \ref{thm2} $. We use the equivariant $ K $-theory and a finite dimensional approximation technique introduced in \cite{F1}. 
\subsection{Equivariant K-theory}

We review several facts on equivariant K-theory, especially, the equivariant Thom isomorphism and tom Dieck’s character formula for the K-theoretic degree.

Let $ V $ and $ W $ be complex $G$ representations for some compact Lie group $G$. Let $BV$ and $BW$ denote closed balls in $V$ and $W$ and $f:BV \to BW$ are a $G$-map presering the boundaries $SV$ and $SW$. By the equivariant Thom isomorphism theorem, $K_G(BV,SV)$ is a free $R(G)$-module generated by the Bott class $\lambda_V$. $f$ induces a map $ f^* : K_G(BW,SW) \to K_G(BV,SV) $ which defines an unique element $\alpha_f \in R(G)$ by $f^*(\lambda_V) = \alpha_f \cdot \lambda_W$.

Let $V_g$ and $W_g$ be the subspace of $V$ and $W$ fixed by an elemant $g \in G$ and let $V_g^\perp$ and $W_g$ be the orthogonal complements. Let $ f_g : V_g \to W_g $ denote the restriction of $f$ and $d(f_g)$ be the ordinal topological degree of $f_g$. For any $\beta \in R(G)$ $\lambda_{-1}$ denote $\Sigma (-1)^i \Lambda^i \beta$.

According to Bryan \cite{Br} we use the following formula.

\begin{lem}[tom Dieck]\label{lem2}
$\mathrm{tr}_g(\alpha_f) = d(f_g)\mathrm{tr}_g(\lambda_{-1}(W_g^\perp - V_g^\perp))$.
\end{lem}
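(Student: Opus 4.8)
The plan is to deduce the formula from naturality of the equivariant Thom isomorphism under restriction to the $g$-fixed subspaces, together with the standard computation of the restriction of a Thom class to a fixed submanifold; since $\mathrm{tr}_g \colon R(G)\to\mathbb{C}$ is a ring homomorphism, it suffices to establish an identity in $R(G)$ and then evaluate at $g$. Conceptually this is the Atiyah--Segal localization argument made explicit, following tom Dieck (and Bryan \cite{Br}).

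First I would record the restriction formula for Bott classes. Using the $G$-splitting $V = V_g \oplus V_g^\perp$, let $i_V \colon (BV_g, SV_g)\hookrightarrow (BV, SV)$ be the induced inclusion. Under the Thom isomorphism for $V_g$, the Bott class $\lambda_V \in K_G(BV, SV)$ restricts to $i_V^*(\lambda_V) = \lambda_{-1}(V_g^\perp)\cdot\lambda_{V_g}$, the K-theoretic Euler class of the normal representation times the Bott class of $V_g$; likewise $i_W^*(\lambda_W) = \lambda_{-1}(W_g^\perp)\cdot\lambda_{W_g}$. Since $f$ is $G$-equivariant it carries $BV_g$ into $BW_g$, so there is a commutative square relating $f^*$ and $(f_g)^*$ through $i_V^*$ and $i_W^*$. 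Chasing $\lambda_W$ around this square (going one way, $f^*(\lambda_W)=\alpha_f\lambda_V$ then restrict; going the other, restrict then apply $(f_g)^*$), and using $R(G)$-linearity of all the maps, gives in $K_G(BV_g, SV_g)$ the identity
\[
\alpha_f\cdot\lambda_{-1}(V_g^\perp)\cdot\lambda_{V_g} \;=\; \alpha_{f_g}\cdot\lambda_{-1}(W_g^\perp)\cdot\lambda_{V_g},
\]
where $\alpha_{f_g}\in R(G)$ is the K-theoretic degree of the restriction $f_g\colon BV_g\to BW_g$. Because $K_G(BV_g, SV_g)$ is a free $R(G)$-module on $\lambda_{V_g}$, I may cancel $\lambda_{V_g}$ and obtain $\alpha_f\cdot\lambda_{-1}(V_g^\perp) = \alpha_{f_g}\cdot\lambda_{-1}(W_g^\perp)$ in $R(G)$.

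Next I apply $\mathrm{tr}_g$. The key point is that $\mathrm{tr}_g(\lambda_{-1}(V_g^\perp)) = \prod_j(1-\zeta_j)$, the product over the eigenvalues $\zeta_j$ of $g$ on $V_g^\perp$; by definition of $V_g^\perp$ none of these equals $1$, so this complex number is nonzero, and similarly for $W_g^\perp$. Dividing gives $\mathrm{tr}_g(\alpha_f) = \mathrm{tr}_g(\alpha_{f_g})\cdot\mathrm{tr}_g\!\big(\lambda_{-1}(W_g^\perp-V_g^\perp)\big)$. Finally $g$ acts trivially on $V_g$ and on $W_g$, so restricting $\alpha_{f_g}$ along $\langle g\rangle\hookrightarrow G$ identifies it with the ordinary topological degree: $\mathrm{tr}_g(\alpha_{f_g}) = d(f_g)$, with the convention that $d(f_g)=0$ when $\dim V_g\neq\dim W_g$ (in which case $\mathrm{tr}_g(\alpha_f)$ vanishes too, since $\mathrm{tr}_g(\lambda_{-1}(V_g^\perp))\neq 0$). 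This gives the claimed formula.

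I expect the main obstacle to be a clean proof of the Thom-class restriction formula $i_V^*(\lambda_V)=\lambda_{-1}(V_g^\perp)\cdot\lambda_{V_g}$ --- equivalently, that the pullback of a Thom class to a submanifold is the Thom class of the submanifold times the K-theoretic Euler class of the normal bundle --- and in keeping the orientation and conjugation conventions in $\lambda_{-1}$ consistent throughout (with the definition $\lambda_{-1}(\beta)=\sum_i(-1)^i\Lambda^i\beta$ used in the excerpt and the identification $K_G(BV,SV)\cong R(G)\lambda_V$ taken with the matching orientation of $V$). An alternative is to bypass the explicit square and invoke the Atiyah--Segal localization theorem after inverting $\lambda_{-1}(V_g^\perp)$ in $R(G)$, but the naturality route keeps everything elementary and explicit.
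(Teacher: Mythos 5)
The paper does not actually prove this lemma: it is quoted as tom Dieck's character formula, with a pointer to Bryan \cite{Br} (and ultimately to tom Dieck's book), so there is no in-paper argument to compare against. What you write is precisely the standard proof of that formula --- multiplicativity/naturality of the equivariant Thom class under the splitting $V=V_g\oplus V_g^{\perp}$, the restriction formula $i_V^{*}(\lambda_V)=\lambda_{-1}(V_g^{\perp})\cdot\lambda_{V_g}$, cancellation of the free generator $\lambda_{V_g}$, nonvanishing of $\mathrm{tr}_g(\lambda_{-1}(V_g^{\perp}))=\prod_j(1-\zeta_j)$ because no eigenvalue of $g$ on $V_g^{\perp}$ equals $1$, and the identification $\mathrm{tr}_g(\alpha_{f_g})=d(f_g)$ since $g$ acts trivially on $V_g$ and $W_g$ --- and it is correct as a sketch. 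Two small points of care: first, the conjugation/orientation convention in the Bott class (whether the restriction of $\lambda_V$ to the origin is $\lambda_{-1}(V)$ or $\lambda_{-1}(\overline{V})$), which you already flag; second, $V_g$ and $W_g$ are a priori only $\langle g\rangle$-subrepresentations rather than $G$-subrepresentations, so the identity $\alpha_f\cdot\lambda_{-1}(V_g^{\perp})=\alpha_{f_g}\cdot\lambda_{-1}(W_g^{\perp})$ should be read in $R(\langle g\rangle)$ after restricting along $\langle g\rangle\hookrightarrow G$; this is harmless because only $\mathrm{tr}_g$ is needed, and in the application of this paper $G=\mathbb{Z}_4$ is abelian anyway.
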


\subsection{Proof of Theorem 2.3}

In the previous section, we constructed an $ \mathbb{Z}_4 $-equivariant map
\begin{align*}
\ell + c : \mathcal{V} \to \mathcal{W}
\end{align*}
for $ \mathcal{V} = i\mathrm{Im(d^{*})}^I \times \Gamma (S^{+})^I $ and $ \mathcal{W} = i\Omega^{+} (X)^I \times \Gamma (S^{-})^I $ where $ \ell = (\mathrm{d}^{+},D) $ is the linear part of $ I $-invariant monopole map $SW^I$ and $ c $ is the quadratic part of $SW^I$ so that $SW^I = \ell + c$.

The space $ SW^{-1} (0) $ is the moduli space of the solutions to Seiberg-Witten equations. Since we assume that $b_1(X) = 0$, $SW^{-1}(0)$ is compact, which implies $ (SW^I)^{-1} (0) $ is also compact. Take a large $R \geq 0$ such that the $L^2_3$-ball $B(R)$ with radius $R$ centred in the origin contains $SW^{-1}(0)$ in its interior. 

We take a sufficiently large finite dimensional subspace $W$ of $\mathcal{W}$ containing cokernel of $\ell$ which we identify with the $L^2$-orthogonal complement of the image of $\ell$. Let $V$ be the inverse image of $ W $ by $ \ell $. Let $\psi$ be composition of the restriction of the $ SW^I $ to $ V $ and the $L^2$-projection $ \mathrm{pr}_W : \mathcal{W} \to W $ i.e. $ \psi = \mathrm{pr}_W \circ SW^I | _V $. The argument in \cite{F1} implies ;

\begin{lem}$\mathrm{(Furuta}$\cite{F1}$\mathrm{)}$
	There exists a finite dimensional subspace $ W_0 \subset \mathcal{W} $ such that for any finite dimensional subspace $ W \subset \mathcal{W} $ which contains $ W_0 $, the intersection of the ball $B(R)$ and the inverse image of $ 0 $ by $ \psi : V \to W $ is compact.
\end{lem}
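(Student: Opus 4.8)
The plan is to recall the key boundedness estimate from Furuta's finite-dimensional approximation and argue that the compactness failure can only come from the infinite-dimensional directions that our finite-dimensional approximation is designed to control. The underlying mechanism is that $SW = \ell + c$ where $\ell$ is linear Fredholm and the quadratic term $c$ is compact: the monopole map is proper, meaning that on the $L^2_3$-completion the preimage of any bounded set under $SW^I$ which is a priori $L^2_3$-bounded is actually compact, and moreover there is an \emph{a priori} bound: if $(a,\phi)$ solves $SW^I(a,\phi)=0$ then $(a,\phi)$ lies in a fixed ball. The point of the lemma is the stronger uniform statement for the approximations $\psi = \mathrm{pr}_W \circ SW^I|_V$.

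The key steps I would carry out, in order: First, fix the radius $R$ as in the statement so that $(SW^I)^{-1}(0)\subset B(R/2)$, say, with room to spare, using compactness of the Seiberg-Witten moduli space (which holds since $b_1(X)=0$). Second, invoke the standard properness/boundedness property of the monopole map: there is a constant such that for $(a,\phi)\in\mathcal{V}$, the $L^2_3$-norm of $(a,\phi)$ is controlled by the $L^2_2$-norm of $SW^I(a,\phi)$ together with lower-order terms — this is exactly the elliptic bootstrapping plus the pointwise bound on $|\phi|$ coming from the Weitzenböck formula, all of which descends to the $I$-invariant subspaces since $I$ commutes with $\ell$, $\sigma$ and $D$. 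Third — the heart of the matter — argue by contradiction: suppose no such $W_0$ works, so for an increasing exhaustion $W_1\subset W_2\subset\cdots$ with $\bigcup W_n$ dense, there are points $x_n\in V_n\cap B(R)$ with $\psi(x_n)=0$ but the set $\{x_n\}$ is not precompact. Write $SW^I(x_n) = \ell(x_n)+c(x_n)$; since $\mathrm{pr}_{W_n}SW^I(x_n)=0$ and $\ell(x_n)\in W_n$ by construction of $V_n$, we get $\ell(x_n) = -\mathrm{pr}_{W_n} c(x_n)$, hence $\|SW^I(x_n)\| = \|(1-\mathrm{pr}_{W_n})c(x_n)\|\to 0$ as $n\to\infty$ because $c$ is a compact map and $\{x_n\}\subset B(R)$ is bounded, so $c(\{x_n\})$ is precompact and $(1-\mathrm{pr}_{W_n})$ tends to $0$ uniformly on precompact sets. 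Fourth, feed $\|SW^I(x_n)\|\to 0$ and $\|x_n\|_{L^2_3}\le R$ into the boundedness/properness estimate to conclude $\{x_n\}$ is $L^2_3$-precompact, contradicting the assumption; this simultaneously shows that any $L^2_3$-limit point $x_\infty$ satisfies $SW^I(x_\infty)=0$, hence $x_\infty\in B(R/2)$, so in fact for $n$ large $x_n$ lies in a fixed compact set strictly inside $B(R)$.

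I expect the main obstacle to be making the compactness of $c$ and the uniform convergence $(1-\mathrm{pr}_{W_n})c(x_n)\to 0$ precise in the right function-space setting: one must be careful that $c$, which involves the quadratic terms $\sigma(\phi)$ and the zeroth-order part of $D_a\phi$, is genuinely a compact operator from the $L^2_3$ ball into $\mathcal{W}$ (this uses the Rellich embedding $L^2_3\hookrightarrow L^2_2$ and the continuity of multiplication $L^2_2\times L^2_2\to L^2_2$ in dimension four), and that the estimate survives restriction to the $I$-invariant subspaces and to the finite-dimensional pieces $V_n$. Everything else is a transcription of Furuta's original argument in \cite{F1}, now carried out $\mathbb{Z}_4$-equivariantly; since the projections $\mathrm{pr}_{W_n}$ are taken equivariantly (possible because $W$ can be chosen $I$-invariant) the equivariance introduces no new analytic difficulty, only the bookkeeping of keeping all constructions in the fixed-point subspaces.
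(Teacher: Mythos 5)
Your route is the one the paper intends: the paper itself offers no proof, only the citation to Furuta, and your analytic core is exactly Furuta's argument --- at a zero of $\psi$ one has $\ell(x)\in W$ (since $x\in V=\ell^{-1}(W)$), hence $SW^I(x)=(1-\mathrm{pr}_{W})c(x)$; the quadratic part $c$ maps the $L^2_3$-ball to a precompact set, the orthogonal projections $\mathrm{pr}_{W_n}$ converge to the identity uniformly on that precompact set once the $W_n$ contain an exhausting family, and the Fredholm property of $\ell$ then yields an $L^2_3$-convergent subsequence whose limit is a genuine zero of $SW^I$; equivariance costs nothing once $W$ is chosen $\mathbb{Z}_4$-invariant.

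The genuine flaw is the contradiction setup, which as written does not negate the lemma. For a fixed finite-dimensional $W$ the set $\psi^{-1}(0)\cap B(R)$ lies in a bounded subset of the finite-dimensional space $V$, so it is automatically precompact; its failure to be \emph{compact} means precisely that interior zeros of $\psi$ accumulate at a zero of $\psi$ of norm exactly $R$ (note also that the lemma is vacuous for the closed ball, so the content is that $\psi$ has no zeros on the sphere $\partial B(R)$, which is what is needed later to get a map of pairs). Consequently the negation supplies, for each member $W_n$ of an exhaustion, a failing $W_n'\supset W_n$ and a zero $x_n$ of $\psi_n$ with $\lVert x_n\rVert_{L^2_3}=R$; it does not supply a non-precompact family $\{x_n\}$, so ``conclude $\{x_n\}$ is precompact, contradicting the assumption'' proves nothing as stated. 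The repair is immediate with your own tools: apply your steps to the sequence on the sphere, obtain a subsequential $L^2_3$-limit $x_\infty$ with $SW^I(x_\infty)=0$ and $\lVert x_\infty\rVert_{L^2_3}=R$, and contradict the choice of $R$ (the moduli space lies in the interior of $B(R)$) --- this is what your closing sentence is implicitly doing. One smaller caution: an estimate of the form $\lVert x\rVert_{L^2_3}\le C(\lVert SW^I(x)\rVert+\cdots)$ gives only boundedness, not precompactness; to extract the convergent subsequence use that $\ell(x_n)=-\mathrm{pr}_{W_n'}c(x_n)$ converges after passing to a subsequence and that $\ell$ is Fredholm (split $x_n$ along $\ker\ell\oplus(\ker\ell)^{\perp}$, or apply the elliptic estimate to differences $x_n-x_m$ together with the Rellich embedding).
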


So we have a $ \mathbb{Z}_4 $-equivariant map $ \psi : V \to W $ such that the inverse image of $ 0 $ is compact.
$ V $ and $ W $ are finite dimensional representations of $ \mathbb{Z}_4 $. By definition of the gauge action, there exists the following isomorphisms as real representations of $ \mathbb{Z}_4 $;
\begin{align*}
V \cong \tilde{\mathbb{R}}^m \oplus \mathbb{C}_1^{n+k} \quad \mbox{\rm{and}} \quad W \cong \tilde{\mathbb{R}}^{m+b} \oplus \mathbb{C}_1^n,
\end{align*}
where $ \mathbb{Z}_4 $ acts on $ \tilde{\mathbb{R}} $ by multiplication of $ \pm 1 $ via the surjection $ \mathbb{Z}_4 \to \{ \pm 1 \} $, and on $ \mathbb{C}_a $ by multiplication via $ \mathbb{Z}_4 \ni a \mapsto i^a \in \mathbb{C}^{*} $ for $ a \in \mathbb{Z} $, $ m $, $ n $ are some nonnegative integer, $ b = b_+^I (X) $, and $ k = \frac{1}{2} \mathrm{index}_\mathbb{R} (D^I)$. Note that $k$ is explicitly calculated as follows. Since the action of $ I $ on $ \Gamma (S^{\pm}) $ is anti-complex linear and the Dirac operator $ D $ is $ I $-equivariant, $ I $ also acts on $ \mathrm{ker} D $ and $ \mathrm{coker} D $ as an anti-complex linear map satisfying $ I^2 = \mathrm{id} $. In other words, $ I $ is a real structure on $ \mathrm{ker} D $ and $ \mathrm{coker} D $, which implies
\begin{align*}
\mathrm{dim}_\mathbb{R} ( \mathrm{ker} (D^I) ) = \mathrm{dim}_\mathbb{R} ( ( \mathrm{ker} D)^I ) = \frac{1}{2} \mathrm{dim}_\mathbb{R} ( \mathrm{ker} D ) = \mathrm{dim}_\mathbb{C} ( \mathrm{ker} D )
\end{align*}
and similarly we have $\mathrm{dim}_\mathbb{R} ( \mathrm{coker} (D^I) ) = \mathrm{dim}_\mathbb{C} ( \mathrm{ker} (D) )$.

In particular we have
\begin{align*}
2\mathrm{index}_\mathbb{R} (D^I) = \mathrm{index}_\mathbb{R} D.
\end{align*}
So we obtain
\begin{align*}
k = \frac{1}{2} \mathrm{index}_\mathbb{R} (D^I) = \frac{1}{4} \mathrm{index}_\mathbb{R} D = -\frac{1}{16} \sigma (X).
\end{align*}

We have a $ \mathbb{Z}_4 $-equivariant map $ \psi_\mathbb{C} : V_\mathbb{C} \to W_\mathbb{C} $ by taking complexification of $ \psi $ as $ \psi_\mathbb{C} (u \otimes 1 + v \otimes i ) := \psi(u) \otimes 1 + \psi(v) \otimes i $. The intersection of the ball $B$ with radius $R$ and the inverse image of $ \psi_\mathbb{C} $ is also compact. So by the same technique in \cite{F1}, we have a $ \mathbb{Z}_4 $-equivariant map $ f $ from a $ \mathbb{Z}_4 $-invariant ball $ BV_\mathbb{C} $ in $ V $ to a $ \mathbb{Z}_4 $-invariant ball $ BW_\mathbb{C} $ in $ W $ preserving their boundaries $ SV_\mathbb{C} $ and $ SW_\mathbb{C} $.

Now we apply tom Dieck's formula to $ f : BV_\mathbb{C} \to BW_\mathbb{C} $. As complex representations of $ \mathbb{Z}_4 $, $ V_\mathbb{C} $ is isomorphic to $ \mathbb{C}_2^m \oplus ( \mathbb{C}_1 \oplus \mathbb{C}_{-1} )^{n+k} $ and $ W_\mathbb{C} $ to $ \mathbb{C}_2^{m+b} \oplus ( \mathbb{C}_1 \oplus \mathbb{C}_{-1} ) $, so we have
\begin{align*}
\mathrm{tr}_j ( \mathrm{deg} f ) = \mathrm{tr}_j ( \Lambda_{-1} ( \mathbb{C}_2^b - ( \mathbb{C}_1 \oplus \mathbb{C}_{-1} )^k ) ) = 2^{b-k}.
\end{align*}
Since $ \mathrm{tr}_j ( \mathrm{deg} f ) $ must be an algebraic integer, we have $ b \geq k $, which implies the inequality

\begin{align*}
b_+^I (X) = b \geq k = -\frac{1}{16} \sigma (X).
\end{align*}

\subsection{Proof of Theorem $ \ref{thm2} $}
We can show theorem $\ref{thm2}$ by an argument parallel to the proof of theorem $\ref{thm1}$. The only difference is we use the following $b$ anf $k$. When we apply Lemma \ref{lem2};
\begin{align*}
b&=b_{+}^{\langle I_1, I_2 \rangle}(X) \\
k&= \frac{1}{2} \mathrm{index}_\mathbb{R}D^{\langle I_1, I_2 \rangle} \\
 &= \frac{1}{2} \frac{\mathrm{index}_\mathrm{id}D + \mathrm{index}_{I_1}D + \mathrm{index}_{I_2}D + \mathrm{index}_{I_1 \circ I_2}D}{4}.
\end{align*}
Note that we have
\begin{align*}
\mathrm{index}_{I_1} D = 2\mathrm{index}D^{I_1} - \mathrm{index}D = 0.
\end{align*}
Similarly we have $\mathrm{index}_{I_2} D = 0$.
Hence we obtain the inequality:
\begin{align*}
b_+^{\langle I_1, I_2 \rangle} (X) = b \geq k = \frac{1}{2} \frac{-\sigma(X)/4 + \mathrm{index}_{I_1 \circ I_2} D}{4}.
\end{align*}
Similarly if we take another lift $I_1$ of $\iota_1$, we obtain
\begin{align*}
b_+^{\langle I_1, I_2 \rangle} (X) = b \geq k = \frac{1}{2} \frac{-\sigma(X)/4 - \mathrm{index}_{I_1 \circ I_2} D}{4}.
\end{align*}
So we have the required inequality.

\end{document}